\DeclareMathOperator{\Var}{Var}
\DeclareMathOperator{\Cov}{Cov}
\newcommand{\Z}{\mathbb{Z}}
\newcommand{\R}{\mathbb{R}}
\newcommand{\be}{\begin{equation}}
\newcommand{\ee}{\end{equation}}
\renewcommand{\P}{\mathrm{P}}
\newcommand{\E}{\mathrm{E}}
\newcommand{\1}{\mathbb{I}}
\renewcommand{\d}{{\rm d}}
\newcommand{\e}{{\rm e}}
\renewcommand{\leq}{\leqslant}
\renewcommand{\ge}{\geqslant}
\renewcommand{\le}{\leqslant}
\author{ Jingyu Huang\\University of Utah
\and Davar Khoshnevisan\\University of Utah}
\title{\bf On the multifractal local behavior of\\parabolic stochastic PDEs\thanks{
	Research supported in part by the NSF grants DMS-1307470 
	and DMS-1608575.}}
\date{August 3, 2017}
\newtheorem{stat}{Statement}[section]
\newtheorem{proposition}[stat]{Proposition}
\newtheorem{theorem}[stat]{Theorem}
\newtheorem{lemma}[stat]{Lemma}
\theoremstyle{definition} 
\newtheorem{remark}[stat]{Remark}
\numberwithin{equation}{section}
\begin{document}
\maketitle
\begin{abstract}
Consider the stochastic heat equation $\dot{u}=\frac12 u''+\sigma(u)\xi$
on $(0\,,\infty)\times\R$ subject to $u(0)\equiv1$,
where $\sigma:\R\to\R$ is a Lipschitz (local) function that does not vanish at
$1$, and $\xi$ denotes
space-time white noise. It is well known that $u$ has continuous 
sample functions \cite{Walsh}; as a result, $\lim_{t\downarrow0}u(t\,,x)= 1$ almost surely
for every $x\in\R$. 

The corresponding fluctuations are also known \cite{KSXZ,LeiNualart,PT}:
For every fixed $x\in\R$,
$t\mapsto u(t\,,x)$ looks locally like a fixed multiple of fractional Brownian motion 
(fBm) with index $1/4$. In particular,
an application of Fubini's theorem implies that, on an $x$-set of full Lebesgue measure,
the short-time behavior of the  peaks of the random function $t\mapsto u(t\,,x)$ 
are governed by the law of the iterated logarithm for fBm, up to possibly
a suitable normalization constant. By contrast, the main result of this
paper claims that, on an $x$-set of full Hausdorff dimension, the short-time peaks of
$t\mapsto u(t\,,x)$ follow a non-iterated logarithm law, and that those
peaks contain a rich multifractal structure a.s. 

Large-time variations of these results were
predicted in the physics literature a number of years ago and proved very recently
in \cite{KKX1,KKX2}. To the best of our knowledge, the short-time results of the present paper
are observed here for the first time.\\

\noindent{\it Keywords:} The stochastic heat equation, multifractals,
	Hausdorff dimension, packing dimension.\\

	\noindent{\it \noindent AMS 2000 subject classification:}
	Primary 60H15; Secondary 35R60, 60K37. 
\end{abstract}

\section{Introduction}
Let $\sigma:\R\to\R$ be a non-random, Lipschitz continuous function,
$\xi:=$ space-time white noise, and consider the unique,
continuous solution
$u=u(t\,,x)$ to the semi-linear stochastic heat equation,
\begin{equation}\label{SHE}\left[\begin{split}
	&\dot{u} = \tfrac12 u'' + \sigma(u)\xi\quad\text{ on $(0\,,\infty)\times\R$},\\
	&\text{subject to}\ u(0)\equiv1.
\end{split}\right.\end{equation}

It is not hard to see that if $\sigma(1)=0$, 
then $u\equiv1$ a.s. Therefore, we assume that
\[
	\sigma(1)\neq0,
\]
to avoid trivialities.
In this case, it has been shown recently by Khoshnevisan,
Swanson, Xiao, and Zhang \cite{KSXZ} that,
for every fixed $x\in\R$ and $t_0>0$, there exists a fractional Brownian motion 
$B=\{B_t\}_{t\ge0}$ of index
$1/4$ such that 
\begin{equation}\label{approx}
	u(t+t_0\,, x)-u(t_0\,, x) \approx \left({2}/{\pi}\right)^{1/4} 
	\sigma(u(t_0\,, x))B_t
	\qquad\text{a.s.\ when $t\approx0$.}
\end{equation}
For closely-related works,
see also Lei and Nualart \cite{LeiNualart} and Posp\'\i\v{s}il and Tribe
\cite{PT}.
The quality of the approximation \eqref{approx}
is good enough that one can deduce from it a
good deal of local information about the sample functions of $t\mapsto
u(t\,,x)$ near $t=0$. For example, it is possible to prove that
the following law of the iterated logarithm holds for all $x\in\R$
and $t_0>0$:
\begin{equation}\label{LIL}
	\limsup_{\varepsilon\downarrow0} \frac{
	u(t_0+\varepsilon\,,x)-u(t_0\,,x)}{%
	\varepsilon^{1/4}\sqrt{\log\log(1/\varepsilon)}}=
	(8/\pi)^{1/4}\sigma(u(t_0\,,x))
	\qquad\text{a.s.}
\end{equation}
And when $t_0=0$, the same result can be shown to hold, using hands-on
methods, but with $(8/\pi)^{1/4}$ replaced by $(4/\pi)^{1/4}$.
To be concrete, we study solely the case $t_0=0$ here.
In that case, we of course have $u(t_0\,,x)=1$, which simplifies the exposition
somewhat.

For every $c>0$ consider the random set
\[
	\mathscr{U}(c) := \left\{x\in[0\,,1]:\ \limsup_{\varepsilon\downarrow0}
	\frac{u(\varepsilon\,,x)-1}{\varepsilon^{1/4}\sqrt{\log(1/\varepsilon)}}
	\ge c\right\}.
\]	
Our earlier remarks about \eqref{LIL} imply that $\mathscr{U}(c)$ has zero Lebesgue
measure a.s.\ for all $c>0$, and hence so does
\[
	\mathscr{U} := \bigcup_{c>0}\mathscr{U}(c).
\]
Among other things, the following theorem shows that
the Lebesgue-null set $\mathscr{U}$
has full Hausdorff dimension a.s.  

Throughout, $\dim_{_{\rm H}}$ and $\dim_{_{\rm P}}$ respectively
denote the Hausdorff and the packing dimension; see Matilla
\cite[Ch.s 4 and 5]{Matilla}.
We will also let $\dim_{_{\rm M}}$ denote the upper Minkowski---or box---dimension;
see the book by Matilla ({\it ibid.}).

\begin{theorem}\label{th:main}
	If $0<c\le \sigma(1)/\pi^{1/4}$, then with probability one,
		\[
			\dim_{_{\rm P}}(\mathscr{U}(c)) =1 \quad\text{and}\quad
			\dim_{_{\rm H}}(\mathscr{U}(c))= 1 - \frac{c^2\sqrt\pi}{|\sigma(1)|^2}.
		\]
	If $c>\sigma(1)/\pi^{1/4}$, 
	then $\mathscr{U}(c)=\varnothing$ a.s.
\end{theorem}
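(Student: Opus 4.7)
The overall strategy is to reduce the continuous-time problem to a sharp Gaussian fast-point analysis along dyadic scales $\varepsilon_k:=2^{-k}$. Two quantitative inputs drive everything. First, a single-point tail bound
\[
P\!\left( u(\varepsilon,x) - 1 \ge c\,\varepsilon^{1/4}\sqrt{\log(1/\varepsilon)}\right) = \varepsilon^{\beta/2 + o(1)}, \qquad \beta := \frac{c^2\sqrt\pi}{|\sigma(1)|^2},
\]
which follows by combining the variance estimate $\Var(u(\varepsilon,x)-1)\sim |\sigma(1)|^2 \sqrt{\varepsilon/\pi}$ (direct heat-kernel calculation, consistent with \eqref{approx} once the $(1/\pi)^{1/4}$ constant appropriate to $t_0=0$ is used) with a Gaussian-quality comparison. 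Second, the spatial near-independence of $u(\varepsilon,x)$ and $u(\varepsilon,y)$ once $|x-y|\gg\sqrt{\varepsilon}$, together with matching H\"older-$\tfrac12-$ control over separations of order $\sqrt{\varepsilon}$. Before attacking $\mathscr U(c)$ directly, I would use the H\"older-$\tfrac14-$ regularity of $t\mapsto u(t,x)$ to show that, modulo an infinitesimal adjustment of $c$, the set $\mathscr U(c)$ agrees with $\limsup_k D_k(c)$, where
\[
D_k(c) := \{x\in[0,1]: u(\varepsilon_k,x)-1 \ge c\,\varepsilon_k^{1/4}\sqrt{\log(1/\varepsilon_k)}\},
\]
so the whole analysis reduces to a limsup of random sets along dyadic times.

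For the upper bound on $\dim_{_{\rm H}}$ and the emptiness statement, I would cover $[0,1]$ by $2^{k/2}$ intervals of length $2^{-k/2}$. By the spatial H\"older control, whether $D_k(c)$ intersects such an interval is essentially determined by the value at its centre, and the single-point bound combined with a Borell--TIS (or chaining) supremum inequality gives
\[
\mathbb{E}\bigl[\text{\# intervals hit by } D_k(c)\bigr] \lesssim 2^{k(1-\beta)/2 + o(k)}.
\]
If $\beta>1$, i.e.\ $c>\sigma(1)/\pi^{1/4}$, this is summable and Borel--Cantelli forces $D_k(c)=\varnothing$ eventually, hence $\mathscr U(c)=\varnothing$ a.s. If $\beta\le 1$, Markov plus Borel--Cantelli promotes the expectation bound to an almost-sure cover of $\bigcup_{k\ge K}D_k(c)$, and summing the $\mathcal H^s$-mass of the cover yields $\mathcal H^s(\mathscr U(c))=0$ for every $s>1-\beta$, hence $\dim_{_{\rm H}}\mathscr U(c)\le 1-\beta$ a.s.

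The matching Hausdorff lower bound is the heart of the matter. I would build a random Cantor set $\mathcal C\subseteq \mathscr U(c-\eta)$ by, at each level $k$ along a sparse subsequence, selecting inside every surviving sub-interval of length $\approx 2^{-k/2}$ a further sub-interval of length $\approx 2^{-(k+1)/2}$ whose centre is fast at scale $\varepsilon_k$ with constant $c-\eta$. A Frostman/energy estimate on the natural measure supported on $\mathcal C$ then yields $\dim_{_{\rm H}}\mathscr U(c-\eta)\ge 1-\beta-O(\eta)$, and letting $\eta\downarrow 0$ along a countable sequence closes the gap. Making this work needs a Paley--Zygmund lower bound on the number of fast sub-intervals, which in turn requires matching Gaussian \emph{lower} tails for $u(\varepsilon_k,x)-1$ and quantified near-independence at separations $\gg\sqrt{\varepsilon_k}$. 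The packing-dimension statement $\dim_{_{\rm P}}\mathscr U(c)=1$ follows by running the Cantor construction with $c$ replaced by a value just below the threshold $\sigma(1)/\pi^{1/4}$ inside an arbitrary subinterval $J\subseteq[0,1]$, which shows that $\mathscr U(c)\cap J$ has upper Minkowski dimension arbitrarily close to $1$ almost surely and hence $\dim_{_{\rm P}}\mathscr U(c)\ge 1-\eta$ for every $\eta>0$.

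The principal obstacle I anticipate is the quantitative Gaussian comparison behind the single-point bound: \eqref{approx} is an approximation in distribution for fixed $x$, but the covering and Cantor arguments demand sharp two-sided tail bounds, with Gaussian constants, that are uniform in $x\in[0,1]$ and in the scale $\varepsilon_k\downarrow 0$. Producing these -- most plausibly by Gaussian concentration or Malliavin-type estimates tailored to the SHE, rather than by \eqref{approx} itself -- together with the associated near-independence statement, is where I expect the bulk of the technical effort to lie.
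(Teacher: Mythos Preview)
Your outline is a legitimate classical route (covering for the upper bound, a random Cantor construction with a Frostman measure for the lower bound), but it differs from the paper's proof in two substantial ways.

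First, the obstacle you flag at the end --- getting sharp two-sided Gaussian tails for $u(\varepsilon,x)-1$ uniformly in $x$ --- is exactly what the paper sidesteps. Rather than proving Gaussian-quality tail bounds for $u$ directly, the paper establishes an almost-sure \emph{uniform} approximation
\[
\sup_{t\in(0,\varepsilon)}\sup_{x\in[0,1]}\bigl|u(t,x)-1-\sigma(1)Z(t,x)\bigr|=o(\varepsilon^{\vartheta})\quad\text{for every }\vartheta<2/5,
\]
where $Z$ solves the linear equation $\dot Z=\tfrac12 Z''+\xi$, $Z(0)\equiv0$. Since $2/5>1/4$, this error is negligible at the scale $\varepsilon^{1/4}\sqrt{\log(1/\varepsilon)}$, so $\mathscr U(c)=\mathscr G(c/\sigma(1))$ a.s., where $\mathscr G$ is the analogous set for $Z$. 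All of the delicate analysis is then purely Gaussian; no Malliavin machinery or nonlinear concentration is needed.

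Second, for the Gaussian problem the paper does not build a Cantor set. Instead it proves a \emph{hitting criterion}: for every Borel $E\subset[0,1]$,
\[
\P\{\mathscr G(c)\cap E\neq\varnothing\}=\begin{cases}1&\text{if }c\le\pi^{-1/4}\sqrt{\dim_{_{\rm P}}(E)},\\ 0&\text{if }c>\pi^{-1/4}\sqrt{\dim_{_{\rm P}}(E)}.\end{cases}
\]
Both dimensions then drop out of a codimension argument: one intersects $\mathscr U(c)$ with an \emph{independent} limsup random fractal $\Sigma_\rho$ (from Khoshnevisan--Peres--Xiao) whose hitting behaviour is governed by $\dim_{_{\rm P}}$, and reads off $\dim_{_{\rm P}}(\mathscr U(c))=1$ and $\dim_{_{\rm H}}(\mathscr U(c))=1-\beta$ simultaneously. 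Your packing-dimension sketch (``Minkowski dimension close to $1$ in every subinterval via the Cantor construction near threshold'') is the weakest link in your plan: the Cantor set you build for a given $c$ has Minkowski dimension $1-\beta$, not $1$, so you would need a separate density/Baire-category argument to get $\dim_{_{\rm P}}=1$. The hitting-criterion route avoids this entirely.
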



It has recently been shown by Kunwoo Kim, Yimin Xiao, and the second author
\cite{KKX1,KKX2}  that the 
largest global oscillations of the random
map $(t\,,x)\mapsto u(t\,,x)$ form an asymptotic multifractal; this property
had been predicted earlier in a voluminous physics literature on
the present ``intermittent'' stochastic systems [as well as for more
complex systems]. It is not hard to deduce
from Theorem \ref{th:main} that the high local oscillations of 
$(t\,,x)\mapsto u(t\,,x)$ are also multifractal
in a precise sense that we now explain next. Indeed,  let us observe that,
by Theorem \ref{th:main},
\begin{equation}\label{RHS}
	\mathscr{U} = \left\{ x\in[0\,,1]:\ 0<\limsup_{\varepsilon\downarrow0}
	\frac{u(\varepsilon\,,x)-1}{\varepsilon^{1/4}\sqrt{\log(1/\varepsilon)}}
	<\infty\right\}
	\qquad\text{a.s.}
\end{equation}
Moreover,  
the right-hand side of \eqref{RHS} 
is equal to $\cup_{c\in I} \mathscr{U}(c)$---where $I:=(0\,,\sigma(1)/\pi^{1/4}]$---%
and $c\mapsto\dim_{_{\rm H}}\mathscr{U}(c)$ is strictly decreasing on 
$I$. Thus, Theorem \ref{th:main} implies that the right-hand side of
\eqref{RHS} is a ``multifractal.''

Finally, let us recall that by a ``solution'' $u=u(t\,,x)$ to equation \eqref{SHE} we 
mean that $u$ is a ``mild'' solution to \eqref{SHE}. That is, $u$ 
satisfies the following for each $t>0$ and $x\in \R$: 
\begin{equation}
	u(t\,,x)=1+ \int_{(0,t)\times\R} p_{t-s}(x-y)\sigma(u(s\,,y))\,\xi(\d s\, \d y)
	\qquad\text{a.s.},
\end{equation}
where $p_r(w) := (2\pi r)^{-1/2}\e^{-w^2/(2r)}$
[$r>0$, $w\in\R$]
denotes the heat kernel on $\R$, and the stochastic integral is 
defined by Walsh \cite{Walsh}.

\section{The constant-coefficient case}

Consider first the constant-coefficient stochastic heat equation,
\begin{equation}\label{L-SHE}\left[\begin{split}
	&\dot{Z} = \tfrac12 Z'' + \xi\quad\text{ on $(0\,,\infty)\times\R$},\\
	&\text{subject to}\ Z(0)\equiv0.
\end{split}\right.\end{equation}
It is easy to see that $Z(t\,,x)=u(t\,,x)-1$,
where $u$ solves \eqref{SHE} in the special case
that $\sigma\equiv1$.

According to the theory of Walsh \cite[Ch.\ 3]{Walsh}, the solution to \eqref{L-SHE}
can be written, in mild form, as the following Wiener integral process:
\begin{equation}\label{Z}
	Z(t\,,x) = \int_{(0,t)\times\R} p_{t-s}(y-x)\,\xi(\d s\,\d y)\,.
\end{equation}

Evidently, $Z$ is a centered, Gaussian random field. The following simple
computation contains all of the
requisite information about the process $Z$.

\begin{lemma}\label{lem:3.3}
	For all $\varepsilon>0$ and $x,x'\in\R$,
	\[
		\Cov\left[ Z(\varepsilon\,,x)\,,
		 Z(\varepsilon\,,x') \right]
		= \frac{\sqrt\varepsilon}{2}g_2\left(\frac{x-x'}{\sqrt\varepsilon}\right),
	\]
	where $g$ denotes the \emph{incomplete Green's function} of the heat kernel;
	that is, $g_t(a) := \int_0^t p_r(a)\,\d r$ for all $t>0$ and $a\in\R$.
\end{lemma}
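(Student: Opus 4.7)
The plan is to reduce everything to a direct computation with Walsh's $L^2$ isometry, followed by the semigroup (Chapman--Kolmogorov) identity for the heat kernel and a scaling change of variables.

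First, I would apply Walsh's isometry to the Wiener integral representation \eqref{Z}. Since $\xi$ is a white noise, the product of two such Wiener integrals has expectation equal to the $L^2((0,\varepsilon)\times\R)$ inner product of the integrands:
\[
    \Cov[Z(\varepsilon\,,x)\,,Z(\varepsilon\,,x')]
    = \int_0^\varepsilon\d s\int_\R p_{\varepsilon-s}(y-x)\,p_{\varepsilon-s}(y-x')\,\d y.
\]
Next, the Gaussian semigroup identity $\int_\R p_a(y-x)p_a(y-x')\,\d y = p_{2a}(x-x')$ collapses the spatial integral, leaving
\[
    \Cov[Z(\varepsilon\,,x)\,,Z(\varepsilon\,,x')] = \int_0^\varepsilon p_{2(\varepsilon-s)}(x-x')\,\d s.
\]
The substitution $r=2(\varepsilon-s)$ converts this to $\tfrac12\int_0^{2\varepsilon}p_r(x-x')\,\d r = \tfrac12 g_{2\varepsilon}(x-x')$.

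Finally, I would invoke the Brownian scaling of the heat kernel, which reads $p_{\varepsilon\rho}(a) = \varepsilon^{-1/2}p_\rho(a/\sqrt\varepsilon)$, together with the substitution $r=\varepsilon\rho$ in the definition of $g_{2\varepsilon}$, to obtain
\[
    g_{2\varepsilon}(a) = \int_0^2 \varepsilon\, p_{\varepsilon\rho}(a)\,\d\rho
    = \sqrt\varepsilon\int_0^2 p_\rho(a/\sqrt\varepsilon)\,\d\rho = \sqrt\varepsilon\, g_2(a/\sqrt\varepsilon).
\]
Setting $a=x-x'$ and combining the previous displays yields the claimed formula.

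There is no real obstacle here; the only thing to watch is keeping careful track of the factor of $2$ when applying Chapman--Kolmogorov (which produces $p_{2(\varepsilon-s)}$ rather than $p_{\varepsilon-s}$) and when changing variables so that the upper limit of the integral becomes $2\varepsilon$ rather than $\varepsilon$. These factors are exactly what produce the $g_2$ (rather than $g_1$) and the leading $\tfrac12$ in the statement.
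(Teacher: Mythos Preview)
Your proof is correct and follows essentially the same route as the paper: Wiener isometry, the semigroup identity to collapse the spatial integral, and then the Brownian scaling of the heat kernel to extract the factor $\sqrt\varepsilon$ and reduce to $g_2$. The only cosmetic difference is that the paper first relabels $\varepsilon-s$ as $s$ and then scales by $r=2s/\varepsilon$ in one step, whereas you pass through the intermediate expression $\tfrac12 g_{2\varepsilon}(x-x')$ before rescaling; the computations are otherwise identical.
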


\begin{proof} 
	Let $\delta:=x-x'$. By the Wiener isometry and \eqref{Z},
	\[
		\Cov\left[ Z(\varepsilon\,,x)\,,
		 Z(\varepsilon\,,x') \right] = \int_0^\varepsilon\d s\int_{-\infty}^\infty\d y\
		 p_{\varepsilon-s}(y-x)p_{\varepsilon-s}(y-x')
		 =\int_0^\varepsilon p_{2s}(\delta)\,\d s.
	\]
	We have used the semigroup property of the heat kernel in the second identity.
	We can change variables $[r:=2s/\varepsilon]$ and observe that
	$p_{r\varepsilon}(\delta)=\varepsilon^{-1/2}p_r(\delta/\sqrt\varepsilon)$
	to finish.
\end{proof}

By l'H\^opital's rule, $g_2(a) \sim 4\pi^{-1/2}a^{-2}\exp ( -a^2/4)$ 
as $a\to\infty$. Therefore, Lemma \ref{lem:3.3},
and the strict positivity and the continuity of $g_2$ together
imply the following: The  Gaussian random field $Z(\varepsilon)$ is stationary
for every fixed $\varepsilon>0$. Furthermore, for every
real number $\alpha$,
\[
	\Cov\left[ Z(\varepsilon\,,0)\,,
	Z\left(\varepsilon\,,\varepsilon^{(1/2)+\alpha}\right) \right] 
	\asymp\begin{cases}
		\varepsilon^{1/2}&\text{if $\alpha\ge0$},\\
		 \varepsilon^{(1/2)+2|\alpha|}\,
		\exp\left(-\dfrac{1}{4\varepsilon^{2|\alpha|}}\right)&\text{if $\alpha<0$}, 
	\end{cases}
\]
uniformly for all $\varepsilon\in(0\,,1)$.
This implies roughly
that if $|x-y|\gg\sqrt\varepsilon$ then 
$Z(\varepsilon\,,x)$ and $Z(\varepsilon\,,y)$ are very close to being uncorrelated,
whereas $Z(\varepsilon)$ locally
behaves as a random constant on a spatial scale of $O(\sqrt\varepsilon)$. In other words, the ``correlation length'' of the random field $Z(\varepsilon)$
is $\sqrt\varepsilon$ when $\varepsilon\approx0$.

The rest of this section is devoted to developing some hitting estimates
for $Z$. The latter is a far simpler object than the solution $u$ to \eqref{SHE}
when $\sigma$ is a non-linear function. Therefore, one can hope for better information
about $Z$ than $u$.

\subsection{Upper bounds}

Because $Z$ is a Gaussian random field, we can appeal to concentration
of measure ideas in order to bound the local supremum of $Z$. The
following entropy estimate contains the key step in that direction.

\begin{lemma}\label{lem:E:max:max:Z}
	Choose and fix a real number $R>0$,
	and define
	\begin{equation}\label{B}
		\mathcal{B}_R(\varepsilon) := (0\,,\varepsilon]\times
		\left[0\,,R\sqrt\varepsilon\right]
		\qquad\text{for all $\varepsilon>0$}.
	\end{equation}
	Then, $\E[\sup_{\mathcal{B}_R(\varepsilon)}Z] \apprle\varepsilon^{1/4}$,
	uniformly for all $\varepsilon\in(0\,,1)$.
\end{lemma}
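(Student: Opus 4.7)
My strategy is to exploit the parabolic scaling of $Z$ to reduce the $\varepsilon$-uniform bound to a statement about a single fixed parabolic rectangle, and then invoke Dudley's entropy bound on that rectangle.

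\smallskip
\noindent\emph{Step 1 (scaling).} From the mild representation \eqref{Z}, performing the change of variables $s=\varepsilon s'$, $y=\sqrt\varepsilon\,y'$ and using the heat-kernel identity $p_{\varepsilon r}(\sqrt\varepsilon\,a)=\varepsilon^{-1/2}p_r(a)$, together with the fact that $\tilde\xi(\d s'\,\d y'):=\varepsilon^{-3/4}\xi(\varepsilon\,\d s',\sqrt\varepsilon\,\d y')$ is again a standard space-time white noise, one finds that
\[
	\tilde Z(t,x):=\varepsilon^{-1/4}Z(\varepsilon t,\sqrt\varepsilon\,x)
\]
has the same law as $Z$ on $(0,\infty)\times\R$. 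Since $\mathcal B_R(\varepsilon)$ is the image of $\mathcal B_R(1)=(0,1]\times[0,R]$ under $(t,x)\mapsto(\varepsilon t,\sqrt\varepsilon\,x)$, this identity yields $\E[\sup_{\mathcal B_R(\varepsilon)}Z]=\varepsilon^{1/4}\,\E[\sup_{\mathcal B_R(1)}Z]$, so the lemma reduces to showing $\E[\sup_{\mathcal B_R(1)}Z]<\infty$.

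\smallskip
\noindent\emph{Step 2 (metric entropy).} On the compact closure of $\mathcal B_R(1)$, I would control the intrinsic Gaussian pseudo-metric $d((s,x),(t,y))^2:=\E[(Z(s,x)-Z(t,y))^2]$. Lemma \ref{lem:3.3}, combined with the elementary estimate $g_2(0)-g_2(a)\lesssim|a|$ (obtained by splitting the integral defining $g_2$ at $r=a^2$), yields the spatial bound $\E[(Z(t,x)-Z(t,y))^2]\lesssim|x-y|$ uniformly in $t\in(0,1]$; the temporal bound $\E[(Z(t,x)-Z(s,x))^2]\lesssim|t-s|^{1/2}$ is the classical one-dimensional computation via the Wiener isometry. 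Hence $d((s,x),(t,y))\lesssim|t-s|^{1/4}+|x-y|^{1/2}$, so the number $N(r)$ of $d$-balls of radius $r$ needed to cover $\mathcal B_R(1)$ is $O(r^{-6})$.

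\smallskip
\noindent\emph{Step 3 (conclude).} Dudley's entropy theorem then gives
\[
	\E\Bigl[\sup_{\mathcal B_R(1)}Z\Bigr]\lesssim\int_0^{c_R}\sqrt{\log N(r)}\,\d r\lesssim\int_0^{c_R}\sqrt{\log(1/r)}\,\d r<\infty,
\]
which together with Step 1 yields the asserted bound. The only subtlety is that $\mathcal B_R(1)$ is half-open at $t=0$, but $Z(0,\cdot)\equiv 0$ and $Z$ admits an a.s.\ continuous modification on $[0,\infty)\times\R$, so the supremum on $\mathcal B_R(1)$ agrees with the supremum on its closure. I do not anticipate a serious obstacle: the scaling identity in Step 1 is exact and the Dudley integral converges with room to spare; the main bookkeeping is simply tracking the powers of $\varepsilon$ that appear jointly in the heat kernel and the white noise during the change of variables in Step 1.
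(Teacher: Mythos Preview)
Your proof is correct and rests on the same two ingredients as the paper's---the parabolic increment bound $d((s,x),(t,y))\lesssim|t-s|^{1/4}+|x-y|^{1/2}$ and Dudley's entropy integral---but you route through them differently. The paper applies Dudley directly to the $\varepsilon$-dependent rectangle $\mathcal{B}_R(\varepsilon)$, computes its $d$-diameter $\asymp\varepsilon^{1/4}$ and its covering number $N_\varepsilon(r)\asymp(1+\varepsilon r^{-4})^{3/2}$, and then evaluates $\int_0^{C\varepsilon^{1/4}}\sqrt{\log N_\varepsilon(r)}\,\d r\propto\varepsilon^{1/4}$, tracking $\varepsilon$ through the integral. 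You instead exploit the exact parabolic self-similarity of $Z$ to pull the factor $\varepsilon^{1/4}$ out \emph{a priori}, reducing to a single finiteness statement on $\mathcal{B}_R(1)$; Dudley then enters only as a qualitative tool. Your approach is a bit cleaner and makes the scaling exponent $1/4$ transparent, while the paper's direct computation has the minor advantage of not requiring the distributional scaling identity for the white noise (which, though standard, does require a moment's care with the $\varepsilon^{3/4}$ normalization). Either way the argument is short; the only small remark is that in Step~2 you in fact only need the metric bound on $\mathcal{B}_R(1)$, so there is no need to verify uniformity in $t\in(0,1]$ for the spatial increment---it is automatic from your own estimate $g_2(0)-g_2(a)\lesssim|a|$ applied at $\varepsilon=1$ after scaling.
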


\begin{proof}
	It is well known that
	$\sqrt{\E ( |Z(t\,,x)-Z(s\,,y)|^2 )}
	\apprle d ( (t\,,x)\,;(s\,,y) ),$
	uniformly for all  $s,t\ge0$ and $x,y\in\R$, 
	where $d$ denotes the spatio-temporal
	distance function,
	\begin{equation}\label{d}
		d\left( (t\,,x)\,;(s\,,y)\right) := |t-s|^{1/4} + |x-y|^{1/2}.
	\end{equation}
	This is a well-known part of the folklore of the subject,
	and appears upon close inspection within
	the proof of Corollary 3.4 of Walsh \cite[p.\ 318]{Walsh},
	for example.
	The details are worked out pedagogically in \cite[(135), p.\ 31]{minicourse}.
	
	Next we note that by Dudley's theorem \cite{Dudley},
	\begin{equation}\label{Dudley}
		\E \sup_{\mathcal{B}_R(\varepsilon)}Z \apprle
		\int_0^{D(\varepsilon)}
		\sqrt{\log N_\varepsilon(r)}\,\d r,
	\end{equation}
	uniformly for all $\varepsilon\in(0\,,1)$, where:
	\begin{compactenum}[(a)]
		\item $D(\varepsilon)$ denotes the $d$-diameter of
			$\mathcal{B}_R(\varepsilon)$; 
		\item $N_\varepsilon$ is the metric entropy
			of $\mathcal{B}_R(\varepsilon)$; that is,
			$N_\varepsilon(r)$ denotes the minimum
			number of $d$-balls of radius $r>0$ that are needed to cover
			$\mathcal{B}_R(\varepsilon)$; and
		\item The implied constant in the inequality \eqref{Dudley} does not depend on
			$\varepsilon$ (consult, for example, Marcus and Rosen 
			\cite[Theorem 6.2, p.\ 245]{MR}).
	\end{compactenum}
	We may consider covering $\mathcal{B}_R(\varepsilon)$
	by $d$-balls of the form
	$\{(s\,,y): |s-t|^{1/4}+ |y-x|^{1/2}\leq r\}$ for every fixed $t,r>0$
	and $x\in\R$. Clearly, the number of such $d$-balls
	of radius $r>0$ that are needed to cover
	$\mathcal{B}_R(\varepsilon)$
	is of sharp order
	\[
		\left( 1+\frac{\varepsilon}{r^4}\right)
		\left( 1 + \frac{\sqrt\varepsilon}{r^2}\right) \asymp
		\left( 1 + \frac{\varepsilon}{r^4}\right)^{3/2} :=
		M_\varepsilon(r),
	\]
	uniformly for all $\varepsilon>0$ and $r\in(0\,,D(\varepsilon))$.
	It is not hard to see that $D(\varepsilon)\asymp\varepsilon^{1/4}$,
	uniformly for all $\varepsilon\in(0\,,1)$. Because
	$N_\varepsilon\apprle M_\varepsilon$
	uniformly on $(0\,,D(\varepsilon))$,
	it follows from \eqref{Dudley} that there exists
	a real number $C>0$ such that
	\[
		\E\sup_{\mathcal{B}_R(\varepsilon)}Z 
		\apprle\int_0^{C\varepsilon^{1/4}}
		\sqrt{1+\log\left( 1  +\frac{\varepsilon}{r^4}\right)}\,\d r
		\propto\varepsilon^{1/4},
	\]
	uniformly for all $\varepsilon\in(0\,,1)$. This completes
	the proof.
\end{proof}

Next we use Lemma \ref{lem:E:max:max:Z} and concentration of measure
in order to deduce an asymptotically-sharp maximal inequality for $Z$.

\begin{lemma}\label{lem:CoM}
	Choose and fix a real number $R>0$, and recall \eqref{B}.
	Then,
	\[
		\lim_{\lambda\to\infty}\frac{1}{\lambda^2}\log
		\P\left\{ \sup_{\mathcal{B}_R(\varepsilon)}Z \ge
		\left(\frac{4\varepsilon}{\pi}\right)^{1/4} \lambda\right\}=-1,
	\]
	uniformly for all $\varepsilon\in(0\,,1)$.
\end{lemma}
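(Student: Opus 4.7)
The plan is to establish matching upper and lower bounds on $\P\{\sup_{\mathcal{B}_R(\varepsilon)} Z \geq (4\varepsilon/\pi)^{1/4}\lambda\}$ via, respectively, Gaussian concentration (Borell's inequality) together with the entropy bound of Lemma~\ref{lem:E:max:max:Z}, and a one-point comparison using Mills' ratio. The constant $4\varepsilon/\pi$ is chosen precisely so that both bounds match; this match comes from a direct computation of the maximum variance of $Z$ on $\mathcal{B}_R(\varepsilon)$.

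For the upper bound, note that by stationarity in $x$ and Lemma~\ref{lem:3.3} with $x=x'$,
\[
	\sup_{(t,x)\in\mathcal{B}_R(\varepsilon)}\Var Z(t\,,x)
	= \Var Z(\varepsilon\,,0) = \tfrac{\sqrt\varepsilon}{2} g_2(0)
	= \sqrt{\varepsilon/\pi},
\]
since $g_2(0)=\int_0^2(2\pi r)^{-1/2}\,\d r = 2/\sqrt\pi$. Together with the estimate $\E\sup_{\mathcal{B}_R(\varepsilon)}Z\le C\varepsilon^{1/4}$ from Lemma~\ref{lem:E:max:max:Z}, Borell's inequality gives, for all sufficiently large $\lambda$ (uniformly in $\varepsilon\in(0,1)$),
\[
	\P\!\left\{\sup_{\mathcal{B}_R(\varepsilon)}Z \ge (4\varepsilon/\pi)^{1/4}\lambda\right\}
	\le \exp\!\left(-\frac{\bigl((4\varepsilon/\pi)^{1/4}\lambda-C\varepsilon^{1/4}\bigr)^2}{2\sqrt{\varepsilon/\pi}}\right).
\]
The factor $\sqrt\varepsilon$ in numerator and denominator cancels cleanly; since $(4/\pi)^{1/2}\cdot\sqrt\pi/2=1$, the exponent equals $-\lambda^2+O(\lambda)$, uniformly in $\varepsilon\in(0,1)$, yielding $\limsup_{\lambda\to\infty}\lambda^{-2}\log\P\le -1$ uniformly.

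For the lower bound, bound the supremum below by a single point value:
\[
	\P\!\left\{\sup_{\mathcal{B}_R(\varepsilon)}Z \ge (4\varepsilon/\pi)^{1/4}\lambda\right\}
	\ge \P\!\left\{Z(\varepsilon\,,0)\ge (4\varepsilon/\pi)^{1/4}\lambda\right\}.
\]
Because $Z(\varepsilon,0)$ is centered Gaussian with variance $\sqrt{\varepsilon/\pi}$, the standardized threshold is $(4\varepsilon/\pi)^{1/4}\lambda/(\varepsilon/\pi)^{1/4}=4^{1/4}\lambda=\sqrt 2\,\lambda$, which is independent of $\varepsilon$. The standard Mills' ratio lower bound $\P\{N(0,1)\ge z\}\ge (2\pi)^{-1/2}z(1+z^2)^{-1}\e^{-z^2/2}$ then gives $\log\P\ge-\lambda^2-O(\log\lambda)$, hence $\liminf_{\lambda\to\infty}\lambda^{-2}\log\P\ge-1$ uniformly in $\varepsilon$.

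The only subtlety is to make sure that the $O(\lambda)$ error from Borell's inequality (which comes from subtracting $\E\sup Z\le C\varepsilon^{1/4}$) is controlled \emph{uniformly} in $\varepsilon\in(0,1)$; this is exactly the content of Lemma~\ref{lem:E:max:max:Z}. Everything else is transparent from the scaling: variance scales like $\sqrt\varepsilon$, the normalization $(4\varepsilon/\pi)^{1/4}$ scales like $\varepsilon^{1/4}$, and their ratio squared is dimensionless, which is why the same constant appears on both sides and why the limit is uniform in $\varepsilon$.
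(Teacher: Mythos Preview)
Your proof is correct and follows essentially the same route as the paper's: Borell--Sudakov--Tsirelson combined with the entropy bound of Lemma~\ref{lem:E:max:max:Z} for the upper bound, and the one-point comparison $\P\{Z(\varepsilon,0)\ge(4\varepsilon/\pi)^{1/4}\lambda\}=\P\{X\ge\sqrt2\,\lambda\}$ for the lower bound. Your version is in fact slightly more explicit, since you identify the maximal variance over $\mathcal{B}_R(\varepsilon)$ (attained at $t=\varepsilon$) and carry out the algebra showing the exponent is exactly $-\lambda^2+O(\lambda)$ uniformly in $\varepsilon$.
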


\begin{proof}
	In accord with Lemma \ref{lem:3.3},
	the Gaussian random field $Z(\varepsilon)$ is stationary and
	centered, with 
	$\Var[Z(\varepsilon\,,0)]=\sqrt{\varepsilon/\pi}$.
	Therefore, the inequality of Borell \cite{Borell}
	and Sudakov and T'sirelson \cite{ST}
	implies that $\sup_{\mathcal{B}_R(\varepsilon)}Z - 
	\E\sup_{\mathcal{B}_R(\varepsilon)}Z$ has sub-Gaussian tails;
	see Ledoux \cite[Ch.\ 7]{Ledoux}. 
	After a few lines of computations, this fact and Lemma \ref{lem:E:max:max:Z}
	together imply that
	\[
		\sup_{\varepsilon\in(0,1)}\log
		\P\left\{ \sup_{\mathcal{B}_R(\varepsilon)}Z \ge
		\left(\frac{4\varepsilon}{\pi}\right)^{1/4} \lambda\right\}
		\le -\lambda^2+o(\lambda),
	\]
	as $\lambda\to\infty$. At the same time,
	\[
		\P\left\{ \sup_{\mathcal{B}_R(\varepsilon)}Z \ge
		\left(\frac{4\varepsilon}{\pi}\right)^{1/4} \lambda\right\}
		\ge\P\left\{ Z(\varepsilon\,,0)\ge
		\left(\frac{4\varepsilon}{\pi}\right)^{1/4} \lambda\right\}
		= \P\{ X\ge 2^{1/2}\lambda\},
	\]
	where $X$ has a standard normal distribution; see Lemma \ref{lem:3.3}.
	Since $\log\P\{X\ge2^{1/2}\lambda\}\ge -\lambda^2+o(\lambda),$
	as $\lambda\to\infty$,
	the result follows.
\end{proof}

Define, for all $c>0$, the random set
\begin{equation}\label{G}
	\mathscr{G}(c) := 
	\left\{ y\in[0\,,1]:\ \limsup_{\varepsilon\downarrow0}
	\frac{Z(\varepsilon\,,y)}{\varepsilon^{1/4}\sqrt{\log(1/\varepsilon)}}\ge c\right\}.
\end{equation}

\begin{lemma}\label{lem:UB:0}
	Let $E$ be a measurable subset of $[0\,,1]$. Then,
	$\P\{\mathscr{G}(c)\cap E=\varnothing\}=1$ for every
	$c>  (1/\pi)^{1/4}\sqrt{\dim_{_{\rm M}}(E)}$.
\end{lemma}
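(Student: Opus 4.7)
The plan is a first-moment Borel--Cantelli argument at geometric time scales, with the key input being the asymptotically sharp Gaussian maximal estimate in Lemma \ref{lem:CoM}, combined with a spatial cover of $E$ at the natural correlation length $\sqrt{\varepsilon}$ of the field $Z(\varepsilon\,,\cdot)$.

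Given $c>(1/\pi)^{1/4}\sqrt{\dim_{_{\rm M}}(E)}$, first pick $c''\in((1/\pi)^{1/4}\sqrt{\dim_{_{\rm M}}(E)}\,,c)$, and then choose $\theta\in(0\,,1)$ close enough to $1$ and $\eta>0$ small enough that
\[
	(c'')^2\sqrt\pi\,\theta^{1/2}\ >\ \dim_{_{\rm M}}(E)+\eta.
\]
Set $\varepsilon_n:=\theta^n$. By definition of upper Minkowski dimension, for all large $n$ we may cover $E$ by $N_n\leq\varepsilon_n^{-(\dim_{_{\rm M}}(E)+\eta)/2}$ intervals $I_{n,i}=[y_{n,i}\,,y_{n,i}+\sqrt{\varepsilon_n}]$. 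For each such $(n\,,i)$, consider the event
\[
	A_{n,i}\ :=\ \bigg\{\sup_{(s,z)\in(0,\varepsilon_n]\times I_{n,i}}Z(s\,,z)\ \ge\ c''\varepsilon_{n+1}^{1/4}\sqrt{\log(1/\varepsilon_{n+1})}\bigg\}.
\]
Because $\varepsilon\mapsto\varepsilon^{1/4}\sqrt{\log(1/\varepsilon)}$ is increasing near $0$, the existence of any $y\in I_{n,i}$ and $\varepsilon\in[\varepsilon_{n+1}\,,\varepsilon_n]$ with $Z(\varepsilon\,,y)\ge c''\varepsilon^{1/4}\sqrt{\log(1/\varepsilon)}$ forces $A_{n,i}$ to occur.

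Spatial stationarity of $Z$ (a byproduct of Lemma \ref{lem:3.3}) lets us apply Lemma \ref{lem:CoM} to the shifted box, with $R=1$ and threshold parameter $\lambda_n:=c''\varepsilon_{n+1}^{1/4}\sqrt{\log(1/\varepsilon_{n+1})}/(4\varepsilon_n/\pi)^{1/4}$, which satisfies $\lambda_n^2\sim\tfrac12(c'')^2\sqrt\pi\,\theta^{1/2}\log(1/\varepsilon_n)$ as $n\to\infty$. This yields $\P(A_{n,i})\le\exp(-\lambda_n^2(1-o(1)))\le\varepsilon_n^{(c'')^2\sqrt\pi\,\theta^{1/2}/2-o(1)}$. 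A union bound then gives
\[
	\P\bigg(\bigcup_i A_{n,i}\bigg)\ \leq\ N_n\cdot\max_i\P(A_{n,i})\ \leq\ \varepsilon_n^{[(c'')^2\sqrt\pi\,\theta^{1/2}-\dim_{_{\rm M}}(E)-\eta]/2-o(1)},
\]
which is geometrically summable by our choice of parameters. Borel--Cantelli therefore gives that a.s., for all large $n$ no $A_{n,i}$ occurs, whence $Z(\varepsilon\,,y)<c''\varepsilon^{1/4}\sqrt{\log(1/\varepsilon)}$ for every $y\in E$ and all small enough $\varepsilon>0$. Taking $\limsup$ yields $\limsup_{\varepsilon\downarrow 0}Z(\varepsilon\,,y)/(\varepsilon^{1/4}\sqrt{\log(1/\varepsilon)})\leq c''<c$ for all $y\in E$, so $\mathscr{G}(c)\cap E=\varnothing$ almost surely.

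The main technical point is bookkeeping the constants so that they combine into exactly the right threshold. The probability of a single bad box at dyadic scale $n$ scales like $\varepsilon_n^{c^2\sqrt\pi/2}$, the factor $1/2$ stemming from the $(4/\pi)^{1/4}$ in Lemma \ref{lem:CoM}, while the spatial count of intervals of length $\sqrt{\varepsilon_n}$ contributes $\varepsilon_n^{-\dim_{_{\rm M}}(E)/2}$, the $1/2$ here coming from $\sqrt{\varepsilon_n}$ being the correlation length of $Z(\varepsilon,\cdot)$. These two factors of $1/2$ conspire to produce precisely the threshold $c^2\sqrt\pi>\dim_{_{\rm M}}(E)$.
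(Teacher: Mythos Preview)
Your proof is correct and follows essentially the same strategy as the paper's: cover $E$ at the spatial correlation scale $\sqrt{\varepsilon}$, apply the sharp Gaussian tail bound of Lemma~\ref{lem:CoM} on each box, take a union bound, and conclude via Borel--Cantelli. The only notable difference is in the choice of time scales: you use geometric scales $\varepsilon_n=\theta^n$ with $\theta$ close to $1$ (absorbing the resulting slack factor $\theta^{1/2}$ into the margin between $c''$ and $c$), whereas the paper uses the sub-exponential sequence $t_n=\e^{-\sqrt n}$, for which $t_{n+1}/t_n\to1$ and the slack disappears automatically without an auxiliary parameter.
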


\begin{proof}
	Define, for every $c>0$,
	\[
		\mathscr{L}(c) := \left\{ t\in[0\,,1]:\ 
		\sup_{y\in E}\ \frac{Z(t\,,y)}{t^{1/4}\sqrt{\log(1/t)}} \ge c \right\}.
	\]
	Every $\mathscr{L}(c)$ is a random subset of the time interval $[0\,,1]$.
	For all integers $n\ge 1$ define
	\[
		t_n := \e^{-\sqrt n},\quad\mathscr{T}(n) := \left[t_{n+1}\,,t_n\right],
		\quad\text{and}\quad
		\mathscr{S}(j;n) := \left[ j\sqrt{t_n}\,, (j+1)\sqrt{t_n}\right],
	\]
	for all integers $j\ge0$. Then, by monotonicity and a simple union bound,
	\begin{align*}
		\P\left\{ \mathscr{L}(c)\cap \mathscr{T}(n)\neq\varnothing\right\}
		&\le\sum_{\substack{%
			0\le j< 1/{t_n}:\\\mathscr{S}(j;n)\cap E\neq\varnothing}}
			\P\left\{ \sup_{t\in \mathscr{T}(n)}
			\sup_{y\in \mathscr{S}(j;n)} Z(t\,,y) > 
			c(nt_{n+1})^{1/4}\right\}.
	\end{align*}
	
	Recall the space-time sets $\mathcal{B}_R(\varepsilon)$ from
	\eqref{B}. Since $y\mapsto Z(t\,,y)$ is stationary, the natural logarithm of
	the $j$th term of the
	preceding sum is equal to 
	\[
		\log\P\left\{ \sup_{t\in \mathscr{T}(n)}
		\sup_{y\in \mathscr{S}(0;n)
		} Z(t\,,y) > c(nt_{n+1})^{1/4}\right\}
		\le \log\P\left\{ \sup_{\mathcal{B}_1(t_n)}Z
		> c(nt_{n+1})^{1/4}\right\}\sim -\frac{c^2\sqrt{\pi n}}{2} ,
	\]
	as $n\to\infty$,
	thanks to Lemma \ref{lem:CoM}
	and the fact that $t_{n+1}/t_n = 1 - (\frac12 + o(1))n^{-1/2}$
	as $n\to\infty$. Therefore, by the definition of the Minkowski
	dimension,
	\[
		\limsup_{n\to\infty}
		\frac{\log \P\left\{ \mathscr{L}(c)
		\cap \mathscr{T}(n)\neq\varnothing\right\}}{\sqrt n}
		\le - \frac{c^2\sqrt\pi}{2} + \frac{\dim_{_{\rm M}}(E)}{2}.
	\]
	In particular, if the right-hand side is $<0$, then $n\mapsto 
	\P\{\mathscr{L}(c)\cap \mathscr{T}(n)\neq\varnothing\}$
	sums to a finite number.
	Therefore, the Borel--Cantelli lemma implies that
	\[
		\limsup_{n\to\infty}\left( \mathscr{L}(c)
		\cap \mathscr{T}(n)\right)=\varnothing\text{ a.s.}\quad
		\forall c> (1/\pi)^{1/4}\sqrt{\dim_{_{\rm M}}(E)}.
	\]
	It is easy to see that 
	$\mathscr{G}(c)\cap E=\varnothing$ a.s.\ on
	the event that $\limsup_{n\to\infty}(\mathscr{L}(c)
	\cap \mathscr{T}(n))=\varnothing$, whence follows the lemma.
\end{proof}

It is easy to apply Lemma \ref{lem:UB:0} in order to improve itself
slightly. Note that the difference between the following and Lemma
\ref{lem:UB:0} is one about the Minkowski versus the packing dimension
of $E$.

\begin{lemma}\label{lem:UB:1}
	Let $E$ be a measurable subset of $[0\,,1]$. Then,
	$\P\{\mathscr{G}(c)\cap E=\varnothing\}=1$ for every
	$c>  (1/\pi)^{1/4}\sqrt{\dim_{_{\rm P}}(E)}.$
\end{lemma}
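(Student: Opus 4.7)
The plan is to deduce this from Lemma \ref{lem:UB:0} by invoking the standard characterization of packing dimension as the countable stabilization of the upper Minkowski dimension: for every bounded $F\subseteq\R$,
\[
	\dim_{_{\rm P}}(F)=\inf\left\{\sup_{n\ge1}\dim_{_{\rm M}}(F_n):\ F\subseteq\bigcup_{n=1}^\infty F_n\right\},
\]
where the infimum is taken over all countable covers of $F$; see, for example, Mattila \cite[Ch.\ 5]{Matilla}.

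Now fix $c>(1/\pi)^{1/4}\sqrt{\dim_{_{\rm P}}(E)}$ and choose a real number $\alpha$ with $\dim_{_{\rm P}}(E)<\alpha<c^2\sqrt\pi$. The above characterization supplies a countable cover $E\subseteq\bigcup_{n\ge1}E_n$ such that $\dim_{_{\rm M}}(E_n)<\alpha$ for every $n\ge1$. Replacing each $E_n$ by $E_n\cap[0\,,1]$---an operation that can only decrease the upper Minkowski dimension---I may assume without loss of generality that every $E_n$ is a measurable subset of $[0\,,1]$ and that $(1/\pi)^{1/4}\sqrt{\dim_{_{\rm M}}(E_n)}<c$. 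Lemma \ref{lem:UB:0} then yields $\P\{\mathscr{G}(c)\cap E_n=\varnothing\}=1$ for every $n\ge1$, and countable subadditivity finishes the proof, since
\[
	\P\{\mathscr{G}(c)\cap E\neq\varnothing\}\le\sum_{n=1}^\infty\P\{\mathscr{G}(c)\cap E_n\neq\varnothing\}=0.
\]

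No new probabilistic input is needed---in particular, no additional Gaussian estimates for $Z$ are required---and the only non-trivial ingredient is the stabilization formula displayed above, which is entirely deterministic. I therefore anticipate no serious obstacle beyond quoting the correct characterization of $\dim_{_{\rm P}}$.
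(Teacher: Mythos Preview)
Your argument is correct and is essentially the same as the paper's: both reduce to Lemma~\ref{lem:UB:0} via the cover characterization of packing dimension and subadditivity. The only cosmetic difference is that the paper writes a finite closed cover $F_1,\ldots,F_m$, whereas you (more accurately, given the standard definition) use a countable one; this changes nothing in the logic.
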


\begin{proof}
	Choose and fix $\eta>0$ small enough to ensure that
	$c>(1/\pi)^{1/4}\sqrt{\dim_{_{\rm P}}(E)+\eta}$.
	By the definition of packing dimension \cite[p.\ 81]{Matilla}
	there exists a closed cover $F_1,\ldots,F_m$ of $E$ 
	such that
	$\dim_{_{\rm P}}(E) \ge \max_{1\le j\le m}\dim_{_{\rm M}}(F_j) - \eta.$
	Thus, $\mathscr{G}(c)\cap F_j=\varnothing$ a.s.\ for all $1\le j\le m$ by Lemma \ref{lem:UB:0}.
	This completes the proof.
\end{proof} 

\subsection{Lower bounds}
As was mentioned earlier, the ``correlation length'' of the
spatial process $Z(\varepsilon)$ is of sharp order $\sqrt\varepsilon$
when $\varepsilon$ is small. It is possible to state
and prove a much stronger, quantitative version of this assertion. In order to describe
that, let us define for all $x\in\R$ and $\varepsilon,\delta\in(0\,,1)$,
\begin{equation}\label{Gamma}
	\Gamma(x\,;\varepsilon,\delta) := \left[x -\sqrt{2\varepsilon\log (1/\delta)}\,,
	x+\sqrt{2\varepsilon\log (1/\delta)}\right],
\end{equation}
and
\begin{equation}\label{Z:delta}
	Z_\delta(\varepsilon\,,x) := \int_{(0,\varepsilon)
	\times \Gamma(x;\varepsilon,\delta)}
	p_{\varepsilon-s}(y-x)\,\xi(\d s\,\d y).
\end{equation}

\begin{lemma}\label{lem:ZZ}
	$\E( |Z(\varepsilon\,,x) - Z_\delta(\varepsilon\,,x)|^2)
	<\delta^2\sqrt\varepsilon$
	for all $\varepsilon,\delta\in(0\,,1)$ and $x\in\R$.
\end{lemma}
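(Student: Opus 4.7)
The plan is to exploit Wiener's isometry to reduce the $L^2$ distance between $Z(\varepsilon,x)$ and $Z_\delta(\varepsilon,x)$ to a deterministic integral of $p_{\varepsilon-s}^2$ over the spatial complement of the interval $\Gamma(x;\varepsilon,\delta)$, and then to show that Gaussian tail decay makes this integral smaller than $\delta^2\sqrt\varepsilon$.

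First I would note that, by construction, $Z(\varepsilon,x)-Z_\delta(\varepsilon,x)$ is the Walsh-Wiener integral of $(s,y)\mapsto p_{\varepsilon-s}(y-x)\mathbf{1}_{\{|y-x|>a\}}$, where $a:=\sqrt{2\varepsilon\log(1/\delta)}$. By the Wiener isometry (applied exactly as in the proof of Lemma \ref{lem:3.3}), translation invariance, and the change of variable $u:=\varepsilon-s$,
\[
	\E\bigl(|Z(\varepsilon,x)-Z_\delta(\varepsilon,x)|^2\bigr)
	= \int_0^\varepsilon\!\d u\int_{|z|>a}\frac{1}{2\pi u}\e^{-z^2/u}\,\d z.
\]

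Next, I would carry out the inner Gaussian integral in closed form. Writing it as a complementary error function via the substitution $v:=z/\sqrt u$, one gets
\[
	\int_{|z|>a}\frac{1}{2\pi u}\e^{-z^2/u}\,\d z
	= \frac{1}{2\sqrt{\pi u}}\,\mathrm{erfc}\!\left(\frac{a}{\sqrt u}\right).
\]
I would then invoke the elementary bound $\mathrm{erfc}(x)\le\e^{-x^2}$ valid for all $x\ge0$, to conclude that the inner integral is at most $\frac{1}{2\sqrt{\pi u}}\exp(-a^2/u)$. The crucial monotonicity observation is that, for $u\in(0,\varepsilon)$,
\[
	\frac{a^2}{u} \ge \frac{a^2}{\varepsilon} = 2\log(1/\delta),
	\qquad\text{hence}\qquad \exp(-a^2/u)\le\delta^2.
\]

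Finally I would integrate the time variable: since $\int_0^\varepsilon u^{-1/2}\,\d u = 2\sqrt\varepsilon$,
\[
	\E\bigl(|Z(\varepsilon,x)-Z_\delta(\varepsilon,x)|^2\bigr)
	\le \frac{\delta^2}{2\sqrt\pi}\int_0^\varepsilon \frac{\d u}{\sqrt u}
	= \frac{\delta^2\sqrt\varepsilon}{\sqrt\pi}
	< \delta^2\sqrt\varepsilon,
\]
which is the desired bound. The argument is essentially a routine but careful Gaussian-tail estimate; the only subtle point is picking the truncation radius $a$ precisely so that $a^2/\varepsilon = 2\log(1/\delta)$, which is exactly how $\Gamma(x;\varepsilon,\delta)$ is defined in \eqref{Gamma}, and noticing that the factor $1/\sqrt\pi<1$ suffices to absorb the constant into the strict inequality.
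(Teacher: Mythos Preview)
Your proof is correct and follows essentially the same route as the paper's: Wiener isometry reduces the problem to a deterministic integral, and a Gaussian tail bound (your $\mathrm{erfc}(x)\le\e^{-x^2}$ is exactly the paper's $\P\{|X|>\lambda\}\le\e^{-\lambda^2/2}$ in different notation) combined with the monotonicity $a^2/u\ge a^2/\varepsilon=2\log(1/\delta)$ finishes it. The only cosmetic difference is that the paper phrases the inner integral as a tail probability of a standard normal variable rather than via $\mathrm{erfc}$, but the computations are line-for-line equivalent.
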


\begin{remark}\label{rem:ind}
	Recall that if $\phi,\psi\in L^2(\R_+\times\R)$ are orthogonal
	in $L^2(\R_+\times\R)$ then the Wiener integrals $\int\phi\,\d\xi$ and 
	$\int\psi\,\d\xi$ are independent. This observation has the following
	by-product: If $x_1,\ldots,x_k\in\R$ satisfy
	\begin{equation}\label{gap}
		\min_{i\neq j}|x_i-x_j| > \sqrt{8\varepsilon\log (1/\delta)},
	\end{equation}
	then
	$\{Z_\delta(\varepsilon\,,x_i)\}_{i=1}^k$ are independent and identically
	distributed. 	It follows that the gap condition \eqref{gap} ensures that
	$\{Z(\varepsilon\,,x_i)\}_{i=1}^k$ is uniformly to within
	$O(\delta\varepsilon^{1/4})$ of an i.i.d.\ sequence.
\end{remark}

\begin{proof}[Proof of Lemma \ref{lem:ZZ}]
	By stationarity, we may---and will---consider only the case that $x=0$.
	According to the Wiener isometry, 
	$\E(| Z(\varepsilon\,,0) - Z_\delta(\varepsilon\,,0)|^2)$ is equal to
	\[
		\int_0^\varepsilon\d s\int_{y\in\R:\ 
		|y|>\sqrt{2\varepsilon\vert\log\delta\vert}}\d y\ 
		|p_s(y)|^2\le \int_0^\varepsilon
		\P\left\{ |X| > \sqrt{\frac{4\varepsilon
		\vert\log\delta\vert}{s}}
		\right\}\, \frac{\d s}{\sqrt{2\pi s}},
	\]
	where $X$ has the standard normal distribution. 
	Elementary manipulations
	and the well-known tail bound
	$\P\{|X|>\lambda\}\le \exp(-\lambda^2/2)$ together yield the lemma.
\end{proof}

\begin{lemma}\label{lem:Z-Z:delta}
	Let $F_1,F_2,\ldots$ be finite subsets of $\R$ that
	satisfy $\log|F_n|\le\kappa(1+o(1))n$ as $n\to\infty$, for some
	$0\le\kappa<\infty$.
	Then, uniformly for all $0<\delta<1$ and $0<\nu<\exp(-2\kappa)$,
	\[
		\limsup_{n\to\infty}\max_{x\in F_n}
		\frac{\left| Z(\nu^n,x)-Z_\delta(\nu^n,x)\right|}{%
		\nu^{n/4}\sqrt{\log(1/\nu^n)}} \le \delta
		\qquad\text{a.s.}
	\]
\end{lemma}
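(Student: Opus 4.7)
The plan is to treat $W_n(x) := Z(\nu^n,x)-Z_\delta(\nu^n,x)$ as a centered Gaussian whose variance is controlled by Lemma \ref{lem:ZZ}, apply a standard union/Gaussian-tail bound over the finite set $F_n$, and conclude via the Borel--Cantelli lemma. The hypothesis $\nu<\e^{-2\kappa}$ is precisely what is needed to make the resulting series summable.

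First, fix $\delta\in(0\,,1)$ and $\nu\in(0\,,\e^{-2\kappa})$. By Lemma \ref{lem:ZZ}, $W_n(x)$ is a centered Gaussian with
\[
	\E[W_n(x)^2] < \delta^2\sqrt{\nu^n} = \delta^2\nu^{n/2}.
\]
Set $a_n := \delta\nu^{n/4}\sqrt{n\log(1/\nu)}$, which is exactly the threshold appearing in the statement. Then the standard Gaussian tail bound combined with a union bound yields
\[
	\P\!\left\{\max_{x\in F_n}|W_n(x)|>a_n\right\}
	\le 2|F_n|\exp\!\left(-\frac{a_n^2}{2\delta^2\nu^{n/2}}\right)
	\le 2|F_n|\,\nu^{n/2},
\]
since $a_n^2/(\delta^2\nu^{n/2}) = n\log(1/\nu)$ by design.

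Next, use the growth hypothesis $\log|F_n|\le\kappa(1+o(1))n$: for any $\eta>0$ with $\kappa+\eta<\tfrac12\log(1/\nu)$ (such $\eta$ exists since $\nu<\e^{-2\kappa}$), the above probability is eventually bounded by
\[
	2\exp\!\Bigl(-n\bigl(\tfrac12\log(1/\nu)-\kappa-\eta\bigr)\Bigr),
\]
which is summable in $n$. The Borel--Cantelli lemma then forces $\max_{x\in F_n}|W_n(x)|\le a_n$ for all $n$ large enough, almost surely, and dividing by $\nu^{n/4}\sqrt{\log(1/\nu^n)}=\nu^{n/4}\sqrt{n\log(1/\nu)}$ gives the limsup bound of $\delta$.

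For the uniform version over all admissible $(\delta\,,\nu)$, the main point is that the probability bound above is monotone in $(\delta\,,\nu)$ in a straightforward way: the exponent $\tfrac12\log(1/\nu)-\kappa-\eta$ is decreasing in $\nu$, and for a fixed $\nu$ the quantity $W_n(x)$ grows with $\delta$ (the truncation window $\Gamma(x;\varepsilon,\delta)$ shrinks as $\delta$ increases, so $Z-Z_\delta$ picks up more contributions, a comparison that can be made rigorous by splitting the Wiener integral over nested spatial annuli). So applying the above argument on a countable dense set of pairs $(\delta_j,\nu_k)$ in the admissible range and then interpolating via these monotonicity considerations delivers a single null set outside which the conclusion holds for all $(\delta\,,\nu)$ in the specified range. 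The only slightly delicate step is this uniformization; the core probabilistic estimate is otherwise routine once Lemma \ref{lem:ZZ} is in hand.
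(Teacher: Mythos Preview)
Your core argument---Gaussian tail bound via Lemma~\ref{lem:ZZ}, union over $F_n$, then Borel--Cantelli using $\nu<\e^{-2\kappa}$---is correct and is exactly what the paper does.

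The final paragraph on uniformization, however, is both unnecessary and flawed. It is unnecessary because the paper itself proves the statement only for each fixed pair $(\delta,\nu)$; the word ``uniformly'' in the lemma should be read as ``for every'' rather than ``on a single null set,'' and that is all the application in Lemma~\ref{lem:LB} requires. It is flawed because your monotonicity claim ``$W_n(x)$ grows with $\delta$'' is not true pathwise: as $\delta$ increases the truncation window $\Gamma(x;\varepsilon,\delta)$ shrinks, so $Z-Z_\delta$ is a Wiener integral over a larger region, but Wiener integrals over nested domains do not satisfy $|{\cdot}|$-monotonicity almost surely---only the variance is monotone. Splitting into annuli gives $W_n^{\delta_2}=W_n^{\delta_1}+(\text{independent Gaussian})$ for $\delta_1<\delta_2$, which does not yield $|W_n^{\delta_2}|\ge|W_n^{\delta_1}|$. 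So drop that paragraph; the rest stands.
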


\begin{proof}
	
	Lemma \ref{lem:ZZ} ensures that
	\[
		\Var\left[Z(\nu^n,0) - Z_\delta(\nu^n,0)\right]
		\le \nu^{n/2}\delta^2\qquad \text{for all $n\ge1$}.
	\]
	Thanks to this and stationarity, a standard Gaussian tail bound yields
	\[
		\frac1n\log \P\left\{ \max_{x\in F_n}\left|
		Z(\nu^n,x) - Z_\delta(\nu^n,x)
		\right| > \delta\nu^{n/4}\sqrt{\log(1/\nu^n)}\right\}
		\le- \frac{1}{2}\log(1/\nu) +\kappa+o(1),
	\]
	as $n\to\infty$. The right-hand side of the preceding display is
	strictly negative for all $n$ sufficiently large. 
	Therefore, the Borel--Cantelli implies the result.
\end{proof}

Finally, we derive a matching ``converse'' to Lemma \ref{lem:UB:1}.
The proof of the following result
borrows heavily ideas from the theory of limsup random fractals 
\cite{KPX}, see also \cite{KS}.

\begin{lemma}\label{lem:LB}
	Recall \eqref{G}, and 
	let $E$ denote an arbitrary measurable subset of $[0\,,1]$. Then,
	$\P\{\mathscr{G}(c)\cap E \neq \varnothing\}=1$ for every
	strictly positive constant $c< \pi^{-1/4}\sqrt{\dim_{_{\rm P}}(E)}.$
\end{lemma}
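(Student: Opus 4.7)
The plan is to realize a suitable subset of $\mathscr{G}(c)\cap E$ as a \emph{limsup random fractal} in the sense of Khoshnevisan, Peres, and Xiao \cite{KPX}, and apply their intersection theory. Fix auxiliary parameters $c' \in (c\,,\pi^{-1/4}\sqrt{\dim_{_{\rm P}}(E)})$, and small numbers $\delta,\eta > 0$ and $\nu \in (0\,,1)$, all to be tuned later; set $\varepsilon_n := \nu^n$. Let $F_n \subset [0\,,1]$ be a maximal $\sqrt{8\varepsilon_n\log(1/\delta)}$-separated set, so that $|F_n|\asymp \varepsilon_n^{-1/2}$. By Remark \ref{rem:ind}, the centered Gaussians $\{Z_\delta(\varepsilon_n\,,x)\}_{x\in F_n}$ are i.i.d., and Lemma \ref{lem:ZZ} together with Lemma \ref{lem:3.3} puts each common variance within $\delta^2\sqrt{\varepsilon_n}$ of $\sqrt{\varepsilon_n/\pi}$. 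A direct Gaussian tail estimate then yields
\[
	p_n := \P\left\{Z_\delta(\varepsilon_n\,,x) \ge c'\varepsilon_n^{1/4}\sqrt{\log(1/\varepsilon_n)}\right\}
	= \varepsilon_n^{(c')^2\sqrt\pi/2 + o(1)}\qquad(n\to\infty),
\]
uniformly in $x\in F_n$. Define the set of ``good points'' $H_n := \{x\in F_n : Z_\delta(\varepsilon_n\,,x)\ge c'\varepsilon_n^{1/4}\sqrt{\log(1/\varepsilon_n)}\}$ and put
\[
	\mathcal{A} := \bigcap_{N\ge 1}\bigcup_{n\ge N}\bigcup_{x\in H_n}[x-r_n\,,x+r_n],\qquad r_n := \varepsilon_n^{(1+\eta)/2}.
\]

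The first task is to show $\mathcal{A}\subseteq \mathscr{G}(c)$ almost surely. Choose $\nu$ small enough that the hypothesis of Lemma \ref{lem:Z-Z:delta} is met with $\kappa=\tfrac12\log(1/\nu)$; that lemma passes from $Z_\delta$ to $Z$ on $F_n$ at the cost of $\delta\varepsilon_n^{1/4}\sqrt{\log(1/\varepsilon_n)}$ uniformly in $x\in F_n$. A Dudley entropy bound (of the type used in Lemma \ref{lem:E:max:max:Z}) for the $|F_n|$ thin spatial boxes $\{\varepsilon_n\}\times[x-r_n\,,x+r_n]$, together with Borell--Sudakov--Tsirelson concentration, shows that the overall spatial oscillation satisfies
\[
	\max_{x\in F_n}\sup_{y\in[x-r_n,x+r_n]}\frac{|Z(\varepsilon_n\,,y)-Z(\varepsilon_n\,,x)|}{\varepsilon_n^{1/4}\sqrt{\log(1/\varepsilon_n)}}\lesssim \varepsilon_n^{\eta/4}\longrightarrow 0\quad\text{a.s.},
\]
because $r_n^{1/2}\sqrt{\log|F_n|}=O(\varepsilon_n^{1/4+\eta/4}\sqrt{\log(1/\varepsilon_n)})$. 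Therefore every $y\in\mathcal{A}$ inherits $Z(\varepsilon_n\,,y)\ge(c'-\delta-o(1))\varepsilon_n^{1/4}\sqrt{\log(1/\varepsilon_n)}$ along infinitely many $n$; taking $\delta$ small enough that $c'-2\delta>c$ places $y\in\mathscr{G}(c)$.

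The second and decisive task is to show $\mathcal{A}\cap E\neq\varnothing$ almost surely. Because the events $\{x\in H_n\}$ for $x\in F_n$ are i.i.d., the random set $A_n := \bigcup_{x\in H_n}[x-r_n\,,x+r_n]$ is a clean limsup-random-fractal scaffold. For each dyadic interval $I\subset[0\,,1]$ of length $r_n$, the number of $F_n$-points within distance $r_n$ of $I$ is of order $r_n/\sqrt{\varepsilon_n}\asymp\varepsilon_n^{\eta/2}$, so
\[
	\P\{I\cap A_n\neq\varnothing\}\asymp \varepsilon_n^{\eta/2}\,p_n = r_n^{\gamma_\eta+o(1)},\qquad \gamma_\eta:= \frac{(c')^2\sqrt\pi+\eta}{1+\eta}.
\]
Since $\gamma_\eta\downarrow(c')^2\sqrt\pi<\dim_{_{\rm P}}(E)$ as $\eta\downarrow 0$, we can fix $\eta$ small enough so that $\gamma_\eta<\dim_{_{\rm P}}(E)$. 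The limsup-random-fractal intersection theorem of \cite{KPX} (whose hypotheses are furnished by the i.i.d. structure of $H_n$ and the hitting-probability asymptotics above) then delivers $\P\{\mathcal{A}\cap E\neq\varnothing\}=1$, and the proof is complete.

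I expect the main obstacle to be this last step: one must verify the hypotheses of the limsup-random-fractal machinery for our almost-i.i.d. Bernoulli scheme on $F_n$ (quasi-stationarity and a correct single-interval hitting asymptotic), and then exploit the packing-dimension hypothesis via Tricot's representation $\dim_{_{\rm P}}(E)=\inf\{\sup_i \dim_{_{\rm M}}(E_i) : E=\cup_i E_i\}$ to extract a subset of $E$ of Minkowski dimension exceeding $\gamma_\eta$ against which a second-moment computation on $\#\{I : I\cap A_n\neq\varnothing,\ I\cap E\neq\varnothing\}$ can be carried out. The first task, by contrast, is a routine modulus-of-continuity calculation resting on Lemmas \ref{lem:ZZ}--\ref{lem:Z-Z:delta} and the entropy bound in Lemma \ref{lem:E:max:max:Z}.
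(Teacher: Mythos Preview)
Your strategy---build a limsup random fractal $\mathcal{A}\subset[0,1]$ from a full grid $F_n$, show $\mathcal{A}\subset\mathscr{G}(c)$, and then invoke the KPX hitting theorem to get $\mathcal{A}\cap E\neq\varnothing$---is different from the paper's and, as written, has two gaps, one minor and one serious.

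\textbf{The minor gap.} You invoke Lemma~\ref{lem:Z-Z:delta} with $\kappa=\tfrac12\log(1/\nu)$, but the hypothesis there is $\nu<\exp(-2\kappa)$, which becomes $\nu<\nu$; no choice of $\nu$ works. This is reparable (replace the threshold $\delta$ by $a\delta$ for any $a>1$ in the proof of that lemma), but it is a genuine error as stated.

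\textbf{The serious gap.} Your KPX step hinges on the single-interval asymptotic $\P\{I\cap A_n\neq\varnothing\}\asymp \varepsilon_n^{\eta/2}p_n$ for dyadic $I$ of length $r_n$. But this is false pointwise: since $r_n=\varepsilon_n^{(1+\eta)/2}\ll\varepsilon_n^{1/2}$ (the spacing of $F_n$), a fixed dyadic $I$ either has an $F_n$-point within $r_n$ (hitting probability $\approx p_n$) or it does not (hitting probability $0$). Your formula is only an \emph{average} over $I$, and the KPX second-moment machinery needs a uniform lower bound. When you then try to count $\#\{I:I\cap A_n\neq\varnothing,\ I\cap E\neq\varnothing\}$, the first moment becomes $p_n\cdot\#\{x\in F_n:\mathrm{dist}(x,E)\le 2r_n\}$, and that last cardinality is governed by the arithmetic interaction between a fixed $\varepsilon_n^{1/2}$-grid and the set $E$ at the much finer scale $r_n$---there is no reason it should be large (for instance, if $E$ is a single point and $\nu$ is generic, it is zero for most $n$). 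Coarsening to scale $\varepsilon_n^{1/2}$ repairs the uniformity but destroys the inclusion $\mathcal{A}\subset\mathscr{G}(c)$, because the spatial oscillation of $Z(\varepsilon_n,\cdot)$ over an interval of length $\varepsilon_n^{1/2}$ is of the same order as the threshold.

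The paper avoids both problems by never building a fractal in all of $[0,1]$. Instead it uses Joyce--Preiss and a Baire-category reduction to find a compact $E_\star\subset E$ with $\dim_{_{\rm M}}(E_\star\cap I)\ge\gamma$ for every open $I$ meeting $E_\star$, then places the separated points $x_{1,n},\dots,x_{N(n),n}$ \emph{inside} $E_\star\cap I$. This makes $N(n)$ of order $\varepsilon_n^{-\gamma/2}$ with $\gamma<1$, so Lemma~\ref{lem:Z-Z:delta} applies cleanly (its condition becomes $\nu<\nu^\gamma$, automatic); and since every candidate point already lies in $E$, no fattening, no oscillation bound, and no KPX intersection theorem are needed---a first-moment bound $\P\{S_n=0\}\le\e^{-\E S_n}$ and Baire category finish the job. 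Your sketch can be salvaged precisely by making this change: take $F_n$ to be a maximal $\sqrt{8\varepsilon_n\log(1/\delta)}$-separated subset of $E_\star$ rather than of $[0,1]$.
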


\begin{proof}
	Choose and fix a number $\gamma\in( 0\,,\dim_{_{\rm P}}(E)).$
	A theorem of Joyce and Preiss \cite{JoycePreiss} implies that there exists
	a \emph{compact} set $F\subset E$ such that $\dim_{_{\rm P}}(F)>\gamma$. Define
	$E_\star :=\cap_{n=1}^\infty \cup (F\cap I)$, where the union is over all
	dyadic subintervals of $[0\,,1]$ that have length $2^{-n}$ and satisfy
	$\dim_{_{\rm M}}(F\cap I)\ge\gamma$.
	Clearly, $E_\star\subset E$ is compact. And the definition of packing dimension
	implies that 
	\begin{equation}\label{Baire}
		\dim_{_{\rm M}}(E_\star\cap I) \ge\gamma
		\quad\text{for all open intervals $I$ that intersect $E_\star$}.
	\end{equation}
	
	Let 
	$\delta \in (0, 1/(2\pi^{1/4}))$ and $\nu\in(0\,,1)$
	be fixed. Also, fix an arbitrary open interval $I$ that intersects $E_\star$ and 
	let $K$ denote the Kolmogorov capacity---or packing numbers---of 
	the compact set $E_\star\cap I$.
	That is, for every $\varepsilon>0$, $K(\varepsilon)$ denote 
	the maximum integer $k$ such that
	we can find $a_1,\ldots,a_k\in E_\star\cap I$ with the property that 
	$\min_{1\le i\ne j\leq k}|a_i-a_j|>\varepsilon$. Recall \cite[p.\ 78]{Matilla} that
	$\dim_{_{\rm M}}(E_\star\cap I) = \limsup_{\varepsilon\downarrow0} 
	\log K(\varepsilon)/\log(1/\varepsilon)$.
	Because of \eqref{Baire},  the preceding fact has the following consequence:
	There exists an infinite collection
	$\mathfrak{N}$ of positive integers, and a sequence 
	$x_{1,n},\ldots,x_{N(n),n}\in E_\star\cap I$---%
	for every $n\ge1$---with the following properties:
	\begin{compactenum}[(a)]
		\item $\min_{1\le i\neq j\le N(n)}|x_{i,n}-x_{j,n}|> \sqrt{8\nu^n\log(1/\delta)}$
			for all $n\ge1$; 
		\item There exists $M>0$
			such that $N(n) \le M[\nu^n\log(1/\delta)]^{-\gamma/2}$ for all $n\ge 1$; and
		\item $N(n) \ge  [8\nu^n\log(1/\delta)]^{-\gamma/2}$ for all $n\in\mathfrak{N}$.
	\end{compactenum}
	
	By Remark \ref{rem:ind}, and because of the stationarity of 
	$x\mapsto Z_\delta(t\,,x)$,
	the random variables $\{Z_\delta(\nu^n,x_{i,n})\}_{i=1}^{N(n)}$ are i.i.d..
	
	For every $c>0$ and $n\in\mathfrak{N}$, let 
	\[
		S_n(c) := \sum_{i=1}^{N(n)}\1\left\{ Z_\delta(\nu^n,x_{i,n}) >
		c \nu^{n/4}\sqrt{\log(1/\nu^n)}\right\},
	\]
	where $\1 A$ denotes the indicator function of the event $A$.
	Then, $S_n(c)$ has a Binomial distribution, and hence
	\begin{equation}\label{P(S)}
		\P\left\{ S_n(c)=0\right\} \le \e^{-\E[S_n(c)]},
	\end{equation}
	thanks to an elementary calculation. 
	Because of Lemma \ref{lem:ZZ}, we know that 
	for all $n\ge 1$,
	\[
		\Var[Z_\delta(\nu^n,0)] \ge 
		\left(\|Z(\nu^n,0)\|_2 - \frac{\delta\pi^{1/4}\nu^{n/4}}{\pi^{1/4}}\right)^2
		= \frac{\nu^{n/2}(1-\delta\pi^{1/4})^2}{\sqrt\pi}. 
	\]
	Therefore a Gaussian tail bound yields that, uniformly for all $n\in\mathfrak{N}$,
	\[
		\E[S_n(c)]  \ge \exp\left\{(1+o(1))
		\frac{n \log(1/\nu)}{2}
		\left[\gamma - \frac{c^2\pi^{1/2}}{\left(
		1-\delta\pi^{1/4}\right)^2}\right]\right\},
	\]
	as $n\to\infty$ in $\mathfrak{N}$. The preceding expression goes to infinity
	as $n\to\infty$ provided that 
	\begin{equation}\label{c:LB}
		c <\pi^{-1/4} \left( 1-\delta\pi^{1/4}\right)\sqrt{\gamma}.
	\end{equation}
	Because of \eqref{P(S)}, it follows that
	$\P\{S_n(c)=0\}\to0$
	as $n\to\infty$ in $\mathfrak{N}$ whenever $c>0$ satisfies \eqref{c:LB}.
	
	Consider the open random sets,
	\[
		U_{n,\nu}(c) := \left\{ x\in\R:\ 
		\frac{Z_\delta(\nu^n,x)}{\nu^{n/4}\sqrt{\log(1/\nu^n)}} > c\right\}
		\qquad[n=1,2,\ldots].
	\]
	We have shown that if $c$ satisfies \eqref{c:LB}, then 
	$\cap_{k=1}^\infty\bigcup_{n=k}^\infty[U_{n,\nu}(c)\cap 
	E_\star\cap I]\neq\varnothing$ a.s.
	for every open interval $I$ that intersects $E_\star$.
	By the Baire category theorem, the random set $\mathscr{U}(c)$
	intersects $E_\star$---hence also $E$---almost surely.
	In particular, we may apply Lemma \ref{lem:Z-Z:delta} with 
	$F_n :=\cup_{i=1}^{N(n)}\{x_{i,n}\}$
	to see that:
	\begin{compactenum}[(a)]
	\item $\kappa=\frac12\gamma\log(1/\nu)$; and
	\item $\P\{ \mathscr{G}(c-\delta)\cap E_\star\neq\varnothing\} =1$
		provided that $c$ satisfies \eqref{c:LB} and $\nu<\exp(-2\kappa)=\nu^{\gamma}$.
	\end{compactenum}
	The latter
	condition on $\nu$ holds automatically because 
	$\gamma<\dim_{_{\rm P}}(E)\le1.$
	Because $\delta$ can be otherwise
	as small as we wish, and since $\gamma<\dim_{_{\rm P}}(E)$ can be as large as we want,
	this proves the lemma.
\end{proof}

Finally, let us describe the critical case that is left open in Lemma \ref{lem:LB}.

\begin{lemma}\label{lem:LB:1}
	The conclusion of  Lemma \ref{lem:LB} continues to hold when
	$c=\pi^{-1/4}\sqrt{\dim_{_{\rm P}}(E)}.$
\end{lemma}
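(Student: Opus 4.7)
The plan is to bootstrap from the construction in the proof of Lemma~\ref{lem:LB} using the Baire category theorem inside the compact approximant, then to pass to the critical constant using a compactness and spatial-regularity argument. First, a careful reading of the proof of Lemma~\ref{lem:LB} reveals slightly more than its stated conclusion: for each $\gamma<\dim_{_{\rm P}}(E)$, the compact set $E_\star=E_\star(\gamma)\subset E$ constructed there has the property that, almost surely, $\mathscr{G}(c)\cap E_\star$ is a \emph{dense} $G_\delta$ subset of $E_\star$, for every $c<\pi^{-1/4}\sqrt{\gamma}$. Indeed, the argument there gives $\bigcap_{k}\bigcup_{n\ge k}[U_{n,\nu}(c)\cap E_\star\cap I]\ne\varnothing$ almost surely for each open interval $I$ meeting $E_\star$, so running $I$ over a countable base produces density; and $\mathscr{G}(c)$ is easily seen to be $G_\delta$ from its definition and the continuity of $Z$. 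The Baire category theorem applied inside the compact metric space $E_\star$ then gives that, almost surely,
\[
	\mathscr{G}\left(\pi^{-1/4}\sqrt{\gamma}\right)\cap E_\star=\bigcap_{q\in\mathbb{Q}\cap(0\,,\pi^{-1/4}\sqrt{\gamma})}\mathscr{G}(q)\cap E_\star
\]
is itself a dense $G_\delta$ subset of $E_\star$, and in particular is nonempty.

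To pass to the critical value $c^*:=\pi^{-1/4}\sqrt{\dim_{_{\rm P}}(E)}$, I would choose $\gamma_k\uparrow\dim_{_{\rm P}}(E)$ and, on the full-probability event where the preceding step holds for every $k$, select $y_k\in\mathscr{G}(\pi^{-1/4}\sqrt{\gamma_k})\cap E_\star(\gamma_k)\subset E$. By compactness of $[0\,,1]$, some subsequence $y_{k_j}$ converges to a limit $y^*\in[0\,,1]$. A preliminary single application of Joyce--Preiss produces a compact $K\subset E$ with packing dimension arbitrarily close to $\dim_{_{\rm P}}(E)$, inside which all the subsequent $E_\star(\gamma_k)$ can be chosen to sit, so that $y^*\in K\subset E$.

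The main obstacle is to show $y^*\in\mathscr{G}(c^*)$. The function $F(y):=\limsup_{\varepsilon\downarrow 0}f_\varepsilon(y)$, with $f_\varepsilon(y):=Z(\varepsilon,y)/[\varepsilon^{1/4}\sqrt{\log(1/\varepsilon)}]$, need not be upper semicontinuous in $y$, so the lower bounds $F(y_{k_j})\ge\pi^{-1/4}\sqrt{\gamma_{k_j}}\to c^*$ do not transfer to $y^*$ for free. The remedy is to use the density established in the first step to re-select $y_{k_j}$ within the dense $G_\delta$ set $\mathscr{G}(\pi^{-1/4}\sqrt{\gamma_{k_j}})\cap E_\star(\gamma_{k_j})$ so that $|y_{k_j}-y^*|\ll\sqrt{\varepsilon_{k_j}}$, where $\varepsilon_{k_j}\downarrow 0$ is a witnessing scale for which $f_{\varepsilon_{k_j}}(y_{k_j})\ge\pi^{-1/4}\sqrt{\gamma_{k_j}}-1/k_j$. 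The spatial modulus of continuity $\sqrt{\E(|Z(\varepsilon,x)-Z(\varepsilon,x')|^2)}\apprle|x-x'|^{1/2}$ implicit in Lemma~\ref{lem:3.3} then gives $f_{\varepsilon_{k_j}}(y^*)=f_{\varepsilon_{k_j}}(y_{k_j})+o(1)$, so $F(y^*)\ge c^*$ as required.
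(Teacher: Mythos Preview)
Your first paragraph is precisely the paper's argument, but you stop it one step too early. The paper runs the very same Baire category argument inside a \emph{single} compact $E_\star\subset E$, taking $c_n\uparrow c^*=\pi^{-1/4}\sqrt{\dim_{_{\rm P}}(E)}$ directly: since each $\mathscr{G}(c_n)\cap E_\star\cap I$ is nonempty almost surely for every open $I$ meeting $E_\star$, Baire gives that $\mathscr{G}(c^*)=\bigcap_n\mathscr{G}(c_n)$ meets $E_\star$. No second step is needed. Your version only reaches $\pi^{-1/4}\sqrt{\gamma}$ because you allowed $E_\star=E_\star(\gamma)$ to vary with $\gamma$; the paper simply keeps the same $E_\star$ from the proof of Lemma~\ref{lem:LB} and lets $c_n$ run all the way up to $c^*$.

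Your attempt to bridge the remaining gap in the second and third paragraphs does not work. The ``re-selection'' step is circular: $y^*$ was defined as a subsequential limit of the $y_{k_j}$, so you cannot then re-choose the $y_{k_j}$ near $y^*$ without also moving $y^*$. Even setting that aside, the density you established is only relative to $E_\star(\gamma_{k_j})$, and you have no reason to expect $y^*\in E_\star(\gamma_{k_j})$ for large $j$; these sets are not nested, and your Joyce--Preiss container $K$ only has $\dim_{_{\rm P}}(K)>\gamma$ for one fixed $\gamma$, not $\dim_{_{\rm P}}(K)=\dim_{_{\rm P}}(E)$, so it cannot in general house $E_\star(\gamma_k)$ once $\gamma_k>\dim_{_{\rm P}}(K)$. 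Most fatally, membership in $\mathscr{G}(\cdot)$ only guarantees witnessing scales $\varepsilon$ arbitrarily close to $0$; nothing lets you choose $\varepsilon_{k_j}$ large relative to $|y_{k_j}-y^*|^2$ while still forcing $\varepsilon_{k_j}\to 0$, so the spatial-modulus transfer $f_{\varepsilon_{k_j}}(y^*)=f_{\varepsilon_{k_j}}(y_{k_j})+o(1)$ is unjustified.
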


\begin{proof}
	Choose and fix a countable sequence of real numbers
	$0<c_1<c_2<\cdots$ that converge upward to $c=
	\pi^{-1/4}\sqrt{\dim_{_{\rm P}}(E)}.$
	The proof of Lemma \ref{lem:LB} showed---for the same compact set
	$E_\star\subset E$ as before---that
	$\P\{\mathscr{G}(c_n)\cap E_\star\cap I\neq\varnothing\}=1$ for all $n\ge 1$ and all open intervals
	$I$ that intersect $E_\star$. An application of the Baire category theorem reveals that
	$\mathscr{G}(c)=\cap_{n=1}^\infty \mathscr{G}(c_n)$ a.s.\ intersects $E_\star$ and hence $E$.
\end{proof}

\section{Proof of Theorem \ref{th:main}}

The proof of Theorem \ref{th:main} hinges on a localization result
which relates the small-time behavior of $u$ to that of $Z$.
First, let us recall a moment estimate of Khoshnevisan,
Swanson, Xiao, and Zhang. When \eqref{SHE} is replaced
by the stochastic heat equation on a torus, an almost-sure version of the following
was derived in the seminal work of Hairer \cite{Hairer}, and played
an important role in the ensuing deep theory of regularity structures of
Hairer \cite{Hairer2}. An almost-sure version of the following 
[for \eqref{SHE}, as is, on the real line] can likely be
deduced also from the work of Hairer and Labb\'e \cite{HairerLabbe}.

\begin{proposition}[\protect{\cite{KSXZ}}]\label{pr:KSXZ}
	For every $k\in[2\,,\infty)$ and $\vartheta\in(0\,,2/5)$,
	\[
		\sup_{t\in(0,\varepsilon)}\sup_{x\in\R}\E\left( \left| u(t\,,x) - 
		1 - \sigma(1)Z(t\,,x)\right|^k\right) = 
		o\left( \varepsilon^{k\vartheta}\right)
		\qquad\text{as $\varepsilon\downarrow0$.}
	\]
\end{proposition}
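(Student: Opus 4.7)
The plan is to exploit the mild formulations of both $u$ and $Z$. Setting $D(t,x):=u(t,x)-1-\sigma(1)Z(t,x)$ and subtracting the Walsh representations yields
\[
	D(t,x)=\int_{(0,t)\times\R} p_{t-s}(x-y)\,[\sigma(u(s,y))-\sigma(1)]\,\xi(\d s\,\d y).
\]
I would then invoke the Burkholder--Davis--Gundy inequality for Walsh stochastic integrals, combined with the generalized Minkowski inequality, to deduce that, for every $k\ge 2$,
\[
	\|D(t,x)\|_k^2 \le c_k\int_0^t\d s\int_\R p_{t-s}(x-y)^2\,\|\sigma(u(s,y))-\sigma(1)\|_k^2\,\d y,
\]
and then use the Lipschitz property of $\sigma$ to replace the $L^k$-norm on the right by a constant multiple of $\|u(s,y)-1\|_k$.

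The key analytic input is the a priori estimate $\|u(s,y)-1\|_k \le C s^{1/4}$, uniform in $y\in\R$. This I would establish by running the very same BDG-plus-Minkowski machinery on the mild equation for $u-1$ itself, first securing the uniform moment bound $\sup_{t\ge 0,\,x\in\R}\E|u(t,x)|^k<\infty$ via a standard Picard/Gronwall iteration under the global Lipschitz hypothesis on $\sigma$. Spatial stationarity of the law of $u(s\,,\cdot)$, which is automatic from the constant initial condition $u(0)\equiv 1$, ensures uniformity in $y$.

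Substituting this bound and using the identity $\int_\R p_{t-s}(x-y)^2\,\d y=(4\pi(t-s))^{-1/2}$ reduces the right-hand side to a constant multiple of the Beta integral $\int_0^t s^{1/2}(t-s)^{-1/2}\,\d s=(\pi/2)\,t$. Hence $\|D(t,x)\|_k^2\le C_k\, t$, so $\E|D(t,x)|^k\le C_k'\, t^{k/2}$ uniformly in $x\in\R$ (again by spatial stationarity, which $Z$ also enjoys). Taking the supremum over $t\in(0\,,\varepsilon)$ therefore produces the bound $O(\varepsilon^{k/2})$, and since $\varepsilon^{k/2}=o(\varepsilon^{k\vartheta})$ for every $\vartheta<1/2$, this settles the stated range $\vartheta\in(0\,,2/5)$ with room to spare.

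The main obstacle is producing the a priori rate $\|u(s\,,\cdot)-1\|_k=O(s^{1/4})$: once it is available, everything downstream collapses to a single elementary Beta integral. That rate in turn rests on the uniform moment bound $\sup\E|u(t,x)|^k<\infty$, which is the only delicate step when $\sigma$ is merely globally Lipschitz (and not bounded), and which is handled by the customary iterative argument against the heat-kernel convolution. The comfortable gap between my bound $1/2$ and the stated exponent $2/5$ suggests that one could absorb some loss---for instance if the a priori Hölder rate were proved only up to $s^{1/4-\eta}$---without disturbing the conclusion of the proposition.
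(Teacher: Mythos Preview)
The paper does not supply its own proof of this proposition; it is quoted as a result of Khoshnevisan, Swanson, Xiao, and Zhang \cite{KSXZ} and used as a black box. So there is no in-paper argument to compare your proposal against.

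Your proposed argument is sound and, in fact, delivers more than the stated $\vartheta<2/5$: the decomposition $D(t,x)=\int p_{t-s}(x-y)[\sigma(u(s,y))-\sigma(1)]\,\xi(\d s\,\d y)$, BDG plus Minkowski, the Lipschitz bound, the a~priori rate $\|u(s,\cdot)-1\|_k\apprle s^{1/4}$, and the Beta integral combine exactly as you describe to give $\|D(t,x)\|_k\apprle t^{1/2}$, hence $o(\varepsilon^{k\vartheta})$ for every $\vartheta<1/2$. One small correction: the blanket moment bound $\sup_{t\ge 0,\,x}\E|u(t,x)|^k<\infty$ is generally \emph{false} for \eqref{SHE} with Lipschitz $\sigma$ (moments typically grow exponentially in $t$; this is intermittency). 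What the Picard/Gronwall iteration actually yields is $\sup_{t\in[0,T],\,x}\E|u(t,x)|^k<\infty$ for each fixed $T$, and since you only need $t\in(0,\varepsilon)$ with $\varepsilon$ small, that local bound is all your argument consumes. With that adjustment the proof goes through as written.
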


We now apply Proposition \ref{pr:KSXZ}---with $k=2$---in order to establish the following.

\begin{proposition}\label{pr:unif:moments}
	For every $\vartheta\in(0\,,2/5)$ and $N,M\ge0$,
	\[
		\sup_{t\in(0,\varepsilon)}\sup_{x\in[-M,N]}\left| u(t\,,x) - 1-
		\sigma(1)Z(t\,,x)\right| = o\left( \varepsilon^\vartheta\right)\quad\text{a.s.}
	\]
\end{proposition}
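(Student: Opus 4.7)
The plan is to promote the pointwise $L^k$ estimate of Proposition \ref{pr:KSXZ} to an almost-sure uniform statement via a standard chaining and Borel--Cantelli argument. Write $D(t,x) := u(t,x) - 1 - \sigma(1)Z(t,x)$, and note that the left-hand side of the proposition is monotone non-decreasing in $\varepsilon$; hence it suffices to show, for some $\vartheta'' \in (\vartheta, 2/5)$, that $\sup_{(0,\varepsilon_n]\times[-M,N]}|D| \le \varepsilon_n^{\vartheta''}$ along the geometric sequence $\varepsilon_n := 2^{-n}$, for all sufficiently large $n$ almost surely.

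Fix $\vartheta < \vartheta'' < \vartheta' < 2/5$ and a large integer $k$ to be chosen. Proposition \ref{pr:KSXZ} provides
\[
\sup_{t \in (0,\varepsilon)} \sup_{x \in \R} \E\bigl(|D(t,x)|^k\bigr) \le C\, \varepsilon^{k\vartheta'}
\]
for all sufficiently small $\varepsilon > 0$. The same SPDE moment folklore cited in the proof of Lemma \ref{lem:E:max:max:Z}, applied separately to $u$ and to $Z$ and combined by the triangle inequality, also gives
\[
\E\bigl(|D(t,x)-D(s,y)|^k\bigr) \le C_k\, d\bigl((t,x);(s,y)\bigr)^k,
\]
uniformly for $s,t \in [0,1]$ and $x,y \in [-M,N]$, where $d$ is the parabolic metric \eqref{d}.

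I would then cover $(0,\varepsilon_n] \times [-M,N]$ by a discrete grid $\mathcal{G}_n$ of temporal mesh $\varepsilon_n^{4\beta}$ and spatial mesh $\varepsilon_n^{2\beta}$ (so that every point of the box lies within $d$-distance $O(\varepsilon_n^\beta)$ of some grid point), where $\beta > 0$ is a parameter to be fixed later; the cardinality $|\mathcal{G}_n|$ is then of order $\varepsilon_n^{1-6\beta}$. Markov's inequality and a union bound applied to the first display yield
\[
\P\Bigl\{ \max_{\mathcal{G}_n} |D| > \varepsilon_n^{\vartheta''} \Bigr\} \le C\, \varepsilon_n^{1-6\beta + k(\vartheta' - \vartheta'')},
\]
which is summable in $n$ provided $k(\vartheta'-\vartheta'') > 6\beta + 1$, so Borel--Cantelli gives $\max_{\mathcal{G}_n} |D| \le \varepsilon_n^{\vartheta''}$ eventually almost surely. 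Applying Kolmogorov's continuity theorem to the second display on the fixed compact box $[0,1]\times[-M,N]$ yields, for $k$ large enough, an almost-surely finite random constant $H$ such that $|D(z)-D(w)|\le H\, d(z,w)^\alpha$ on this box for any preassigned $\alpha < 1$. The oscillation between a point of the small box and its closest grid neighbor is then at most $H\varepsilon_n^{\alpha\beta}$, which is $o(\varepsilon_n^{\vartheta''})$ once $\alpha\beta > \vartheta''$.

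Combining the grid and oscillation bounds yields $\sup_{(0,\varepsilon_n]\times[-M,N]}|D| \le 2\varepsilon_n^{\vartheta''}$ for all large $n$ almost surely, and the monotonicity observation upgrades this to the small-$\varepsilon$ limit claimed in the proposition. The main obstacle, such as it is, is the simultaneous calibration of the parameters $(\vartheta'',\vartheta',k,\alpha,\beta)$: one needs $\vartheta<\vartheta''<\vartheta'<2/5$, $\alpha<1$, $\alpha\beta>\vartheta''$, and $k(\vartheta'-\vartheta'')>6\beta+1$, all of which can be arranged by choosing $\alpha$ close to $1$, $\beta$ slightly above $\vartheta''$, and $k$ sufficiently large. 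No individual step poses any substantial difficulty.
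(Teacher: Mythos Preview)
Your proposal is correct and follows essentially the same route as the paper: a grid over $(0,\varepsilon]\times[-M,N]$ with temporal mesh $\varepsilon^{4\beta}$ and spatial mesh $\varepsilon^{2\beta}$, a union bound on the grid via Proposition~\ref{pr:KSXZ} and Markov's inequality, control of the off-grid oscillation through the H\"older regularity of $u$ and $Z$, and Borel--Cantelli along $\varepsilon_n=2^{-n}$ together with monotonicity. The only cosmetic difference is that the paper handles the oscillation by a direct $L^k$ bound on the supremum of increments (Walsh) plus Chebyshev, whereas you invoke Kolmogorov's continuity theorem once to obtain an almost-sure H\"older constant; both choices lead to the same parameter juggling and the same conclusion.
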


\begin{remark}
	If, instead of an SPDE
	on $\R_+\times\R$, we studied an analogous SPDE on $\R_+\times[0\,,1]$, then
	the work of Hairer and Pardoux \cite{HP} improves further the error rate 
	of the analogue of
	Proposition \ref{pr:unif:moments} to $O(\sqrt{\varepsilon|\log\varepsilon|})$. 
	We are not sure if the latter rate is the right one
	in the present setting, but  will not need these improvements and so will prove only what we need.
\end{remark}

\begin{proof}[Proof of Proposition \ref{pr:unif:moments}]
	Without loss of great generality, we study only the case that $M=0$ and $N=1$; the general case
	is proved similarly. 
	
	Let $\Delta(t\,,x) := u(t\,,x) -  1-\sigma(1)Z(t\,,x)$
	for all $t\ge0$ and $x\in\R$,
	and for every $\varepsilon\in(0\,,1)$ define
	\[
		\mathfrak{X}(\varepsilon):=
		\left\{ j\varepsilon^{2\vartheta}:\, 0\le j<\varepsilon^{-2\vartheta},\, j\in\Z\right\}
		\quad\text{and}\quad
		\mathfrak{T}(\varepsilon):=
		\left\{j\varepsilon^{4\vartheta}:\, 0\le j<\varepsilon^{1-4\vartheta},\,j\in\Z\right\}.
	\]
	We may apply Proposition \ref{pr:KSXZ} [with an arbitrary $k\ge2$]
	in order to see that, for all real numbers $b\in(0\,,\vartheta)$,
	\[
		\P\left\{ \max_{t\in\mathfrak{T}(\varepsilon)}
		\max_{x\in\mathfrak{X}(\varepsilon)}
		| \Delta(t\,,x) | >\varepsilon^b\right\}
		\le \varepsilon^{-kb}\sum_{t\in\mathfrak{T}(\varepsilon)}
		\sum_{x\in\mathfrak{X}(\varepsilon)}
		\| \Delta(t\,,x)\|_k^k
		\apprle \varepsilon^{(k-6)\vartheta- k b+1},
	\]
	uniformly for all $\varepsilon\in(0\,,1)$. Therefore,
	\[
		\P\left\{ \sup_{t\in(0,\varepsilon)}\sup_{x\in[0,1]}
		| \Delta(t\,,x) | > 3\varepsilon^b\right\}
		\apprle \varepsilon^{(k-6)\vartheta - kb+1} + 
		\mathcal{P}(u\,;\varepsilon) + \mathcal{P}(Z\,;\varepsilon),
	\]
	where
	\[
		\mathcal{P}(\Phi\,;\varepsilon) := 
		\P\left\{ \sup_{ \substack {t\in(0,\varepsilon) 
		\\ s\in\mathfrak{T}(\varepsilon)\,,  |s-t|\leq \varepsilon^{4\vartheta}}}
		\sup_{\substack {x\in[0,1]\\ y\in\mathfrak{X}(\varepsilon)\,, |y-x|\leq \varepsilon^{2\vartheta}}}
		|\Phi(t\,,x)-\Phi(s\,,y)| > K(\Phi)\varepsilon^b\right\}
	\]
	and the symbol $\Phi$ is in $\{u\,,Z\}$, $K(u)=1$, and $K(Z)=1/\sigma(1)$.
	Because $\sigma(1)\neq0$  throughout,
	$K(\Phi)$ is well defined and finite. Therefore, by the work of Walsh 
	\cite[Ch.\ 3]{Walsh}, 
	\[
		\left\| \sup_{ \substack {t\in(0,\varepsilon) \\ s\in\mathfrak{T}(\varepsilon)\,,  
		|s-t|\leq \varepsilon^{4\vartheta}}}
		\sup_{\substack {x\in[0,1]\\ y\in\mathfrak{X}(\varepsilon)\,, |y-x|\leq \varepsilon^{2\vartheta}}}
		\left| \Phi(t\,,x)-\Phi(s\,,y)\right| \right\|_k
		\le\varepsilon^{\vartheta+o(1)}\qquad\text{as $\varepsilon\downarrow0,$}
	\]
	for both possible choices of $\Phi\in\{u\,,Z\}$.
	As a result, it can be deduced from Chebyshev's inequality that
	$\mathcal{P}(\Phi\,;\varepsilon)\le \varepsilon^{k(\vartheta-b)+o(1)}=
	O(\varepsilon^{k(\vartheta-b)})$. Thus, we find that
	\[
		\P\left\{ \sup_{t\in(0,\varepsilon)}\sup_{x\in[0,1]}
		| \Delta(t\,,x) | > 3\varepsilon^b\right\}
		\apprle \varepsilon^{(k-6)\vartheta - kb}\quad\text{as $\varepsilon\downarrow0$}.
	\]
	Replace $\varepsilon$ by $2^{-n}$ to see from a monotonicity argument and
	the Borel--Cantelli lemma that, almost surely,
	$\sup_{t\in(0,\varepsilon)}\sup_{x\in[0,1]}
	|\Delta(t\,,x) |=O(\varepsilon^b)$ as $\varepsilon\downarrow0$.
	The result follows from this because $k\ge2$,
	$\vartheta\in(0\,,2/5)$, $b\in(0\,,(k-6)\vartheta/k)$
	are all otherwise arbitrary.
\end{proof}

We conclude the paper with the following.

\begin{proof}[Proof of Theorem \ref{th:main}]
	We will assume throughout that $E$ is a subset of $[0\,,1]$. A routine
	change of scale [in the arguments] will yield the general case.
	
	Proposition \ref{pr:unif:moments} implies that, for every real number $c>0$,
	$\mathscr{U}(c) = \mathscr{G}(c/\sigma(1))$ a.s.,
	where the random sets $\mathscr{G}(\bullet)$ were defined in \eqref{G}.
	It follows from this and Lemmas \ref{lem:UB:1}, \ref{lem:LB},
	and \ref{lem:LB:1} that
	for all measurable sets $E\subset[0\,,1]$:
	\begin{compactenum}
		\item[\bf A1.] If $c\le \pi^{-1/4}\sigma(1)\sqrt{\dim_{_{\rm P}}(E)}$, 
			then $\P\{\mathscr{U}(c)\cap E\neq\varnothing\}=1$; and
		\item[\bf A2.] If $c>(1/\pi)^{1/4}\sigma(1)\sqrt{\dim_{_{\rm P}}(E)}$,
			then $\P\{\mathscr{U}(c)\cap E\neq\varnothing\}=0$; and
	\end{compactenum}
	
	According to the theory of limsup random fractals \cite{KPX}, 
	for every $\rho\in(0\,,1)$
	there exists a random set $\Sigma_\rho\subset[0\,,1]$, that is 
	independent of the process $u$, and
	satisfies the following for all measurable sets $F\subset[0\,,1]$:
	\begin{compactenum}
		\item[\bf B1.] $\P\{\Sigma_\rho\cap F\neq\varnothing\}=1$ 
			if $\rho<\dim_{_{\rm P}}(F)$. In this case,
			$\dim_{_{\rm P}}(\Sigma_\rho\cap F)=\dim_{_{\rm P}}(F)$; and
		\item[\bf B2.] $\P\{\Sigma_\rho\cap F\neq\varnothing\}=0$ 
		if $\rho>\dim_{_{\rm P}}(F)$.
	\end{compactenum}
	Both {\bf B1} and {\bf B2} follow from Theorems 3.1 and 3.2 of \cite{KPX}.
	
	By the independence of
	$\Sigma_\rho$ and $\mathscr{U}(c)$, we may combine 
	{\bf A1} and {\bf B1}--{\bf B2}
	with $F=\mathscr{U}(c)$ and $E=\Sigma_\rho$ in order to see that
	$\dim_{_{\rm P}}(\Sigma_\rho)=1$ and:
	\[
		c\le \sigma(1)/\pi^{1/4}\ \Rightarrow\ 
		\P\{\mathscr{U}(c)\cap\Sigma_\rho\neq\varnothing\}=1\
		\Rightarrow\ \rho\le \dim_{_{\rm P}}(\mathscr{U}(c)).
	\]
	Since $\rho\in(0\,,1)$ was arbitrary, it follows that
	$\dim_{_{\rm P}}(\mathscr{U}(c))=1$ a.s.\ 
	for all $c\le \sigma(1)/\pi^{1/4}$.
	On the other hand, {\bf A2} shows that if $c>\sigma(1)/\pi^{1/4}$, then
	$\mathscr{U}(c)=\varnothing$ a.s.
	
	Finally, the computation of the Hausdorff dimension of $\mathscr{U}(c)$ follows from 
	{\bf A1} and {\bf A2}, using  a codimension argument which we skip;
	see Peres \cite{Peres}
	and Khoshnevisan \cite[\S4.7, p.\ 435]{MPP} for details.
\end{proof}

\section*{Acknowledgement} We are grateful to an anonymous 
referee for his or her careful reading of the paper, and for
making numerous constructive comments that improved the 
presentation, and well as content, of this paper.

\spacing{.95}

\bigskip
\small
\noindent\textbf{Jingyu Huang} [\texttt{jhuang@math.utah.edu}]\\
\noindent 
	Department of Mathematics, University of Utah, Salt Lake City ,UT 84112-0090\\
	
\noindent\textbf{Davar Khoshnevisan} [\texttt{davar@math.utah.edu}]\\
\noindent	Department of Mathematics, University of Utah, Salt Lake City ,UT 84112-0090\\[.2cm]

\end{document}